\numberwithin{equation}{section}
\numberwithin{figure}{section}
\numberwithin{table}{section}
\theoremstyle{plain}
\newtheorem{thm}{\protect\theoremname}[section]
\theoremstyle{definition}
\newtheorem{defn}[thm]{\protect\definitionname}
\theoremstyle{plain}
\newtheorem{lem}[thm]{\protect\lemmaname}
\theoremstyle{plain}
\newtheorem{cor}[thm]{\protect\corollaryname}
\providecommand{\corollaryname}{Corollary}
\providecommand{\definitionname}{Definition}
\providecommand{\lemmaname}{Lemma}
\providecommand{\theoremname}{Theorem}
\begin{document}
\title{further generalization of central sets theorem for partial semigroups
and vip systems }
\author{anik pramanick and md mursalim saikh}
\email{pramanick.anik@gmail.com}
\address{Department of Mathematics, University of Kalyani, Kalyani, Nadia-741235,
West Bengal, India.}
\email{mdmsaikh2016@gmail.com}
\address{Department of Mathematics, University of Kalyani, Kalyani, Nadia-741235,
West Bengal, India.}
\subjclass[2020]{05D10, 05C55, 22A15, 54D35.}
\keywords{Central Sets,  Central Sets Theorem, Partial semigroup,
algebra of Stone-\v{C}ech compactification of descrete semigroup.}
\begin{abstract}
The Central Sets Theorem, a fundamental result in Ramsey theory, is
a joint extension of both Hindman's theorem and van der Waerden's
theorem. It was originally introduced by H. Furstenberg using methods
from topological dynamics. Later, using the algebraic structure of
the Stone-\v{C}ech compactification $\beta S$ of a semigroup $S$,
N. Hindman and V. Bergelson extended the theorem in 1990. H. Shi and
H. Yang established a topological dynamical characterization of central
sets in an arbitrary semigroup $\left(S,+\right)$, and showed it
to be equivalent to the usual algebraic characterization. D. De, N.
Hindman, and D. Strauss later proved a stronger version of the Central
Sets Theorem for semigroups in 2008. D. Phulara further genaralized
the result for commutative semigroups in 2015. Recently in his work,
Zhang generalized it further and proved the central sets theorem for
uncountably many central sets. We extend the theorem to arbitrary
adequate partial semigroups and VIP systems.
\end{abstract}

\maketitle

\section{Introduction}

The notion of the central subet of $\mathbb{N}$ was originally introduced
by Furstenberg via topological dynamics. The following is the Central
Sets Theorem proved by Furstenberg in 1981 \cite{key-3}.
\begin{thm}
\label{Original CST} ( The Original Central Sets Theorem) Let $l\in\mathbb{N}$
and for each $i\in\left\{ 1,2,\ldots,l\right\} $, let $\left(y_{i,n}\right)_{n=1}^{\infty}$
be a sequence in $\mathbb{Z}$. Let $C$ be a central subset of $\mathbb{N}.$
Then there exist sequences $\langle a_{n}\rangle_{n=1}^{\infty}$in
$\mathbb{N}$ and $\langle H_{n}\rangle_{n=1}^{\infty}$ in $\mathcal{P}_{f}\left(\mathbb{N}\right)$
such that
\begin{enumerate}
\item for all $n$, $\max H_{n}<\min H_{n+1}$ and
\item for all $F\in\mathcal{P}_{f}\left(\mathbb{N}\right)$ and all $i\in\left\{ 1,2,\ldots,l\right\} $,
$\sum_{n\in F}\left(a_{n}+\sum_{t\in H_{n}}y_{i,t}\right)\in C$.
\end{enumerate}
\end{thm}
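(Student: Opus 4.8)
I would run the entire argument inside the algebra of $\left(\beta\mathbb{N},+\right)$, using the standard fact that $C$ is central exactly when $C\in p$ for some minimal idempotent $p$ of $\beta\mathbb{N}$. Fix such a $p$ and put $C^{\star}=\left\{ x\in C:-x+C\in p\right\} $; since $p+p=p$ one has $C^{\star}\in p$ and, for every $x\in C^{\star}$, also $-x+C^{\star}\in p$. The plan is to construct $\left\langle a_{n}\right\rangle _{n=1}^{\infty}$ in $\mathbb{N}$ and $\left\langle H_{n}\right\rangle _{n=1}^{\infty}$ in $\mathcal{P}_{f}\left(\mathbb{N}\right)$ recursively so that $\max H_{n}<\min H_{n+1}$ for all $n$ and so that for every $n$, every $i\in\left\{ 1,\ldots,l\right\} $ and every nonempty $F\subseteq\left\{ 1,\ldots,n\right\} $ one has $\sum_{k\in F}\left(a_{k}+\sum_{t\in H_{k}}y_{i,t}\right)\in C^{\star}$. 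Since $C^{\star}\subseteq C$ and every finite $F\subseteq\mathbb{N}$ satisfies $F\subseteq\left\{ 1,\ldots,\max F\right\} $, such a construction delivers both conclusions at once.

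For the recursion, suppose $a_{1},\ldots,a_{n}$ and $H_{1},\ldots,H_{n}$ are in hand (with $n=0$ the empty start). Let $B$ be the finite set of all sums $\sum_{k\in F}\left(a_{k}+\sum_{t\in H_{k}}y_{i,t}\right)$ with $i\in\left\{ 1,\ldots,l\right\} $ and $\emptyset\neq F\subseteq\left\{ 1,\ldots,n\right\} $; by hypothesis $B\subseteq C^{\star}$, so $D:=C^{\star}\cap\bigcap_{b\in B}\left(-b+C^{\star}\right)$ belongs to $p$. It then suffices to produce $a_{n+1}\in\mathbb{N}$ and $H_{n+1}\in\mathcal{P}_{f}\left(\mathbb{N}\right)$ with $\min H_{n+1}>\max H_{n}$ (no constraint when $n=0$) and $a_{n+1}+\sum_{t\in H_{n+1}}y_{i,t}\in D$ for every $i$: any nonempty $F\subseteq\left\{ 1,\ldots,n+1\right\} $ with $n+1\in F$ then has its sum of the form $b+d$ with $b\in B\cup\left\{ 0\right\} $ (an old sum, or the empty sum) and $d\in D$, whence the sum lies in $C^{\star}$ by the definition of $D$ together with the absorption property of $C^{\star}$, while the sums indexed by $F\subseteq\left\{ 1,\ldots,n\right\} $ are untouched. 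To ensure $a_{n+1}\in\mathbb{N}$ I would adjoin the zero sequence $y_{0,t}\equiv0$ to the family before applying the lemma below; then $a_{n+1}=a_{n+1}+\sum_{t\in H_{n+1}}y_{0,t}\in D\subseteq\mathbb{N}$.

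Everything therefore reduces to the following \emph{simultaneous configuration lemma}, which I expect to be the crux: if $D$ belongs to a minimal idempotent $p$ of $\beta\mathbb{N}$, $m\in\mathbb{N}$, and $\left\langle y_{i,t}\right\rangle _{t=1}^{\infty}$ for $i\in\left\{ 1,\ldots,l\right\} $ are sequences in $\mathbb{Z}$, then there are $a\in\mathbb{Z}$ and $H\in\mathcal{P}_{f}\left(\mathbb{N}\right)$ with $\min H>m$ and $a+\sum_{t\in H}y_{i,t}\in D$ for all $i$. To prove it I would pass to $\mathbb{Z}^{l}$: let $\delta\colon\mathbb{Z}\to\mathbb{Z}^{l}$ be the diagonal homomorphism, $\pi_{i}\colon\mathbb{Z}^{l}\to\mathbb{Z}$ the coordinate projections, and $E:=\bigcap_{r\geq1}\overline{FS\left(\left\langle \left(y_{1,t},\ldots,y_{l,t}\right)\right\rangle _{t\geq r}\right)}$ the associated compact right topological subsemigroup of $\beta\left(\mathbb{Z}^{l}\right)$, which by the Ellis--Numakura lemma contains an idempotent. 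The essential point is that $D$ lying in a \emph{minimal} idempotent forces $D^{l}=D\times\cdots\times D$ to be met by the $\delta$-translates of some element of $E$; concretely, one relates a suitably chosen idempotent $q\in E$ to the minimal left ideal $\beta\mathbb{Z}+p$ (via the projections $\widetilde{\pi_{i}}\left(q\right)$) and then, using the absorption property $-x+C^{\star}\in p$, runs down first $a$ and then a vector $\vec{v}\in FS\left(\left\langle \left(y_{1,t},\ldots,y_{l,t}\right)\right\rangle _{t>m}\right)$ with $\delta\left(a\right)+\vec{v}\in D^{l}$; reading off the coordinates of $\vec{v}$ supplies $H$. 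This is precisely the place where Furstenberg's original argument invokes the topological multiple recurrence theorem, and it is the main obstacle; granting it, the two recursions above are routine.
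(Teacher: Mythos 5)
Your outer construction is fine, and it is the standard way the algebraic proof is organized: fix a minimal idempotent $p$ with $C\in p$, pass to $C^{\star}$, and do the recursion in which at each stage the finite set $B$ of previously produced sums lies in $C^{\star}$, so that $D=C^{\star}\cap\bigcap_{b\in B}\left(-b+C^{\star}\right)\in p$ and it suffices to place one new configuration $a_{n+1}+\sum_{t\in H_{n+1}}y_{i,t}$ (all $i$ simultaneously, $\min H_{n+1}>\max H_{n}$) inside $D$; your device of adjoining the zero sequence to force $a_{n+1}\in\mathbb{N}$ also works. Note that the paper itself offers no proof of this theorem: it is quoted from Furstenberg, whose argument is dynamical (topological multiple recurrence), so there is nothing in the paper to compare against; you are attempting the Bergelson--Hindman style algebraic proof, which is a legitimate route.

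The problem is that everything you have actually proved is the routine reduction, while the ``simultaneous configuration lemma'' you isolate \emph{is} the theorem, and you concede it (``granting it, the two recursions above are routine''). The sketch you give for it does not constitute an argument: choosing some idempotent $q$ in $E=\bigcap_{r\geq1}\overline{FS\left(\langle(y_{1,t},\ldots,y_{l,t})\rangle_{t\geq r}\right)}$ and ``relating'' the projections $\widetilde{\pi_{i}}(q)$ to the minimal left ideal $\beta\mathbb{Z}+p$ supplies no mechanism forcing $\delta(a)+\vec{v}\in D^{l}$; an arbitrary idempotent of $E$ has no a priori connection to $p$, and the absorption property of $C^{\star}$ cannot create one. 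Where minimality of $p$ genuinely enters is a different, concrete argument in the product semigroup $\left(\beta\mathbb{Z}\right)^{l+1}$: for fixed $m$ let $I=\left\{ \left(a,\,a+\sum_{t\in H}y_{1,t},\ldots,a+\sum_{t\in H}y_{l,t}\right):a\in\mathbb{N},\ H\in\mathcal{P}_{f}\left(\mathbb{N}\right),\ \min H>m\right\} $ and $E_{0}=I\cup\left\{ \left(a,a,\ldots,a\right):a\in\mathbb{N}\right\} $; one checks that $\overline{E_{0}}$ is a compact subsemigroup of $\left(\beta\mathbb{Z}\right)^{l+1}$ and that $\overline{I}$ is a two-sided ideal of it; since $K\left(\left(\beta\mathbb{Z}\right)^{l+1}\right)$ is the product of the smallest ideals coordinatewise, the diagonal point $\overline{p}=\left(p,p,\ldots,p\right)$ is a minimal idempotent of the product lying in $\overline{E_{0}}$, hence $\overline{p}\in K\left(\overline{E_{0}}\right)\subseteq\overline{I}$; finally $\overline{D}\times\overline{D}\times\cdots\times\overline{D}$ is a neighborhood of $\overline{p}$, so it meets $I$, which yields $a$ and $H$ with $\min H>m$, $a\in D$, and $a+\sum_{t\in H}y_{i,t}\in D$ for all $i$ (so the zero-sequence trick is not even needed). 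Without this ideal-in-subsemigroup step (or Furstenberg's dynamical substitute, which is what the cited source uses), your proposal has a genuine gap at exactly the point where the hypothesis that $p$ is \emph{minimal} must be used.
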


Subsequently, V. Bergelson and N. Hindman extended this formulation
of the Central Sets Theorem to arbitrary semigroups, as presented
in \cite{key-4-1}. Later, in 2008, D. De, N. Hindman, and D. Strauss
established a stronger version of the Central Sets Theorem inÂ \cite{key-2}.

\begin{thm}
\label{De} Let $\left(S,+\right)$ be a commutative semigroup and
let $C$ be a central subset of $S$. Then there exist functions $\alpha:\mathcal{P}_{f}\left(^{\mathbb{N}}S\right)\to S\text{ and }H:\mathcal{P}_{f}\left(S^{\mathbb{N}}\right)\to\mathcal{P}_{f}\left(\mathbb{N}\right)$
such that
\begin{enumerate}
\item if $F,G\in\mathcal{P}_{f}\left(S^{\mathbb{N}}\right)$ and $F\subsetneq G$
then $\max H\left(F\right)<\min H\left(G\right)$ and
\item if $m\in\mathbb{N}$, $G_{1},G_{2},\ldots,G_{m}\in\mathcal{P}_{f}\left(S^{\mathbb{N}}\right)$,
$G_{1}\subsetneq G_{2}\subsetneq\ldots\subsetneq G_{m}$, and for
each $i\in\left\{ 1,2,\ldots,m\right\} $, $\langle y_{i,n}\rangle_{n=1}^{\infty}\in G_{i}$,
then 
\[
\sum_{i=1}^{m}\left(\alpha\left(G_{i}\right)+\sum_{t\in H\left(G_{i}\right)}y_{i,t}\right)\in C.
\]
\end{enumerate}
\end{thm}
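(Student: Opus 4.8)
The plan is to argue algebraically, through the structure of $\beta S$. Since $C$ is central, I would fix a minimal idempotent $p$ of $(\beta S,+)$ with $C\in p$ and set $C^\star=\{x\in C:-x+C\in p\}$, where $-x+C=\{y\in S:x+y\in C\}$; I will use the standard facts that $C^\star\in p$ and that $-x+C^\star\in p$ for every $x\in C^\star$. Then $\alpha(F)$ and $H(F)$ are defined by induction on $|F|$, maintaining throughout the two hypotheses: (i) if $\emptyset\neq F'\subsetneq F$ then $\max H(F')<\min H(F)$; and (ii) for every chain $\emptyset\neq G_1\subsetneq\cdots\subsetneq G_m$ on whose members $\alpha$ and $H$ are already defined and every choice $\langle y_{i,t}\rangle_{t=1}^{\infty}\in G_i$ ($1\leq i\leq m$), one has $\sum_{i=1}^{m}\bigl(\alpha(G_i)+\sum_{t\in H(G_i)}y_{i,t}\bigr)\in C^\star$. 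Since $C^\star\subseteq C$, hypothesis (i) is exactly conclusion (1) and hypothesis (ii) implies conclusion (2).

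For the inductive step I would fix $F$ with $|F|=n+1$, assume $\alpha,H$ defined on all smaller sets, and let $M$ be the collection of all sums $\sum_{i=1}^{m}\bigl(\alpha(G_i)+\sum_{t\in H(G_i)}y_{i,t}\bigr)$ arising from chains $\emptyset\neq G_1\subsetneq\cdots\subsetneq G_m\subsetneq F$ with $\langle y_{i,t}\rangle_{t=1}^{\infty}\in G_i$; this $M$ is finite and, by (ii), $M\subseteq C^\star$. Put $D=C^\star\cap\bigcap_{w\in M}(-w+C^\star)$, a member of $p$ as a finite intersection of members of $p$, and $N=\max\bigl(\{0\}\cup\{\max H(G):\emptyset\neq G\subsetneq F\}\bigr)$. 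Since $D\in p$ and $p$ is a minimal idempotent, $D$ is a central subset of $S$, so the Central Sets Theorem of Bergelson and Hindman for commutative semigroups, applied to $D$ and to the finitely many shifted sequences $\langle y_{i,\,t+N}\rangle_{t=1}^{\infty}$ coming from the members $\langle y_{i,t}\rangle_{t=1}^{\infty}$ of $F$, yields (reading off the first term of the sequences it provides) an $a\in S$ and an $H_0\in\mathcal{P}_f(\mathbb N)$ with $a+\sum_{s\in H_0}y_{i,\,s+N}\in D$ for each such $i$. I then set $\alpha(F)=a$ and $H(F)=\{s+N:s\in H_0\}$, so that $\alpha(F)+\sum_{t\in H(F)}y_{i,t}\in D\subseteq C^\star$ for every $\langle y_{i,t}\rangle_{t=1}^{\infty}\in F$. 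Now (i) holds for $F$ because $\min H(F)>N\geq\max H(G)$ for all nonempty $G\subsetneq F$; and (ii) is preserved because any newly available chain has $F$ as its top member, so its sum is either $\alpha(F)+\sum_{t\in H(F)}y_{m,t}\in D\subseteq C^\star$ (when the chain is $(F)$) or $w+b$ with $w\in M$ and $b=\alpha(F)+\sum_{t\in H(F)}y_{m,t}\in D\subseteq -w+C^\star$, whence $w+b\in C^\star$. The base case $|F|=1$ is the same argument with $M=\emptyset$, $N=0$, $D=C^\star$.

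The only place where real content enters is this inductive construction: one must produce a single $a\in S$ and a single finite $H\subseteq\mathbb N$ (with $\min H$ beyond every previously used index) for which $a+\sum_{t\in H}y_{i,t}$ lands in the prescribed member $D$ of $p$ \emph{simultaneously} for every sequence appearing in $F$, regardless of which earlier chain is being extended. That uniformity over finite families of sequences is precisely the content of the already-known Central Sets Theorem (equivalently, the assertion that members of minimal idempotents are $J$-sets); everything else is bookkeeping with $p$ and $C^\star$. I would expect the only fussy points in a full write-up to be phrasing hypothesis (ii) so that it is exactly what propagates through the induction, and, when $S$ has no identity, keeping each $H(G)$ nonempty so that all the partial sums are defined.
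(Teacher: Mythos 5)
Your proposal is correct. Note that the paper itself does not prove Theorem \ref{De}; it quotes it from De--Hindman--Strauss \cite{key-2}, and the closest argument actually written out in the paper is the Section 3 proof of the partial-semigroup generalization. Your induction is structurally the same as that argument (and as the original one in \cite{key-2}): induct on $\vert F\vert$, work with $C^{\star}$ and the fact that $-x+C^{\star}\in p$ for $x\in C^{\star}$, collect the finitely many previously produced sums into a set $M$, intersect to get $D=C^{\star}\cap\bigcap_{w\in M}\left(-w+C^{\star}\right)\in p$, and then find one $a$ and one block $H$ beyond all earlier indices that works simultaneously for all sequences in $F$; the splitting of a new chain's sum as $w+b$ with $b\in -w+C^{\star}$ is exactly the paper's $x*y$ step. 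The one genuine difference is how you obtain the simultaneity: you invoke the older Bergelson--Hindman Central Sets Theorem for finitely many sequences (applied to the central set $D$ and the sequences shifted past $N$), whereas \cite{key-2} and the paper's Section 3 use a $J$-set-type lemma (here Lemma \ref{Lem2}, that members of idempotents in $J(S)$ meet such configurations with $t(1)$ large). Your shortcut is legitimate in the commutative, central-set setting, since $D\in p$ with $p$ a minimal idempotent is central and the earlier theorem applies verbatim; what the $J$-set route buys instead is independence from the earlier CST and the ability to generalize the same induction to idempotents in $J(S)$ ($C$-sets) and to adequate partial semigroups, which is the direction the paper pursues. Your closing caveats (phrasing hypothesis (ii) so it propagates, keeping $H(G)$ nonempty) are indeed the only bookkeeping issues, and your handling of them is fine.
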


In 2015, D. Phulara generalized the stronger version of the Central
Sets Theorem for commutative semigroups. The theorem is as follows:
\begin{thm}
\label{Pu} Let $\left(S,+\right)$ be a commutative semigroup, let
$r$ be an idempotent in $J\left(S\right)$, and let $\langle C_{n}\rangle_{n=1}^{\infty}$
be a sequence of members of $r$. Then there exist $\alpha:\mathcal{P}_{f}\left(\mathbb{^{\mathbb{N}}}S\right)\to S$
and $H:\mathcal{P}_{f}\left(\mathbb{^{\mathbb{N}}}S\right)\to\mathcal{P}_{f}\left(\mathbb{N}\right)$
such that 
\begin{enumerate}
\item if $F,G\in\mathcal{P}_{f}\left(\mathbb{^{\mathbb{N}}}S\right)$ and
$F\subsetneq G$ , then $\max H\left(F\right)<\min H\left(G\right)$
and
\item whenever $t\in\mathbb{N}$, $G_{1},G_{2},\ldots,G_{t}\in\mathcal{P}_{f}\left(^{\mathbb{N}}S\right)$,
$G_{1}\subsetneq G_{2}\subsetneq\ldots\subsetneq G_{t}$, $\mid G_{1}\mid=m$
and for each $i\in\left\{ 1,2,\ldots,t\right\} $, $f_{i}\in G_{i}$,
then
\end{enumerate}
\[
\sum_{i=1}^{t}\left(\alpha\left(G_{i}\right)+\sum_{s\in H\left(G_{i}\right)}f_{i}\left(s\right)\right)\in C_{m}.
\]

\end{thm}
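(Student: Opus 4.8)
The plan is to run the inductive construction of De, Hindman and Strauss that underlies Theorem \ref{De}, but to make the bookkeeping remember the size $m=|G_{1}|$ of the smallest set occurring in a chain and to steer the corresponding partial sum into $C_{m}$ rather than into one fixed set. Throughout one works inside $\beta S$ and uses two standard ingredients: first, since $r$ is an idempotent, for every $A\in r$ the set $A^{\star}=\{x\in A:-x+A\in r\}$ again lies in $r$, and $x\in A^{\star}$ implies $-x+A^{\star}\in r$; second, since $r\in J(S)$, every member of $r$ is a $J$-set, and in the definition of a $J$-set the witnessing set $H$ may always be taken with $\min H$ exceeding any prescribed bound (the relevant facts about $J$-sets are from \cite{key-2}).

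I would define $\alpha(G)$ and $H(G)$ for $G\in\mathcal{P}_{f}(^{\mathbb{N}}S)$ by strong induction on $|G|$, carrying the invariant: whenever $G_{1}\subsetneq\cdots\subsetneq G_{t}$ is a chain all of whose members have already been processed and $f_{i}\in G_{i}$ for each $i$, the sum $\sum_{i=1}^{t}\bigl(\alpha(G_{i})+\sum_{s\in H(G_{i})}f_{i}(s)\bigr)$ lies in $C_{|G_{1}|}^{\star}$. To process a set $G$ with $|G|=n$, for each $m$ with $1\le m<n$ let $M_{m}$ be the finite set of all sums $\sum_{i=1}^{t'}\bigl(\alpha(G_{i})+\sum_{s\in H(G_{i})}f_{i}(s)\bigr)$ arising from chains $G_{1}\subsetneq\cdots\subsetneq G_{t'}\subsetneq G$ with $|G_{1}|=m$ and $f_{i}\in G_{i}$. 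By the invariant each such $w$ lies in $C_{m}^{\star}$, hence $-w+C_{m}^{\star}\in r$, and therefore
\[
A_{G}=C_{n}^{\star}\cap\bigcap_{m=1}^{n-1}\;\bigcap_{w\in M_{m}}(-w+C_{m}^{\star})
\]
is a member of $r$, so in particular a $J$-set. Applying the ($\min H$ large) $J$-set property to the finite family $G$, with the threshold chosen so large that $\min H>\max H(F)$ for every nonempty $F\subsetneq G$ (all such $F$ are already processed, since $|F|<|G|$), yields $a\in S$ and $H\in\mathcal{P}_{f}(\mathbb{N})$ with $a+\sum_{s\in H}f(s)\in A_{G}$ for all $f\in G$; set $\alpha(G)=a$ and $H(G)=H$.

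The verification is then short. Conclusion (1) is immediate from the choice of threshold. For (2), fix a chain $G_{1}\subsetneq\cdots\subsetneq G_{t}$ with $|G_{1}|=m$ and $f_{i}\in G_{i}$, and argue by induction on $t$ that the sum lies in $C_{m}^{\star}\subseteq C_{m}$: if $t=1$ then $m=n=|G_{1}|$ and the sum equals $\alpha(G_{1})+\sum_{s\in H(G_{1})}f_{1}(s)\in A_{G_{1}}\subseteq C_{n}^{\star}$; if $t\ge2$, write the sum as $w+\bigl(\alpha(G_{t})+\sum_{s\in H(G_{t})}f_{t}(s)\bigr)$, note that $w\in M_{m}$ for the set $G_{t}$ because $|G_{1}|=m<|G_{t}|$, and use $\alpha(G_{t})+\sum_{s\in H(G_{t})}f_{t}(s)\in A_{G_{t}}\subseteq -w+C_{m}^{\star}$ to land the whole sum in $C_{m}^{\star}$.

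The step I expect to require the most care is not any individual estimate but the organization of the recursion: a given $G$ with $|G|=n$ can sit atop chains whose bottom has any size $m\le n$, so $\alpha(G)$ and $H(G)$ must be chosen to respect all of $C_{1},\dots,C_{n}$ at once, which is exactly what the finite intersection defining $A_{G}$ arranges, and why the invariant is phrased through $C_{|G_{1}|}^{\star}$. A related subtlety is the systematic use of starred sets: it is $-w+C_{m}^{\star}\in r$, not merely $-w+C_{m}\in r$, that keeps the iterated translates inside $r$ and allows the induction to feed its own hypothesis.
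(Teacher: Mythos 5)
Your proposal is correct and follows essentially the same route as the paper's own argument (the proof of its main theorem in Section 3, which is the Phulara/De--Hindman--Strauss scheme): induction on $\vert G\vert$, the starred sets $C_m^{\star}$ with $x\in C_m^{\star}\Rightarrow -x+C_m^{\star}\in r$, the finite sets $M_m$ of already-built sums, the intersection $A_G=C_n^{\star}\cap\bigcap_m\bigcap_{w\in M_m}\left(-w+C_m^{\star}\right)\in r$, and the $J$-set property with a large threshold on $\min H$ to secure conclusion (1). The only point to make explicit in a final write-up is the standard lemma that the $J$-set witness $H$ can be chosen with $\min H$ above any prescribed bound, which you correctly flag and which is available in \cite{key-2}.
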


In \cite{key-8}, Jillian McLeod establishes a version of Theorem
\ref{Original CST} valid for commutative adequate partial semigroups.
In \cite{key-10}, Kendra Pleasant and in \cite{key-4}, Arpita Ghosh,
independently but later, prove a version of Theorem \ref{De} for
commutative adequate partial semigroups. In \cite{key-9}, Dev Phulara
generalized the Central Sets Theorem for commutative semigroups. Later,
in 2021, N. Hindman and K. Pleasant proved the Central Sets Theorem
for adequate partial semigroups in \cite{key-6}. Here, we present
a generalization of {[}Theorem $3.6$, \cite{key-6}{]} by K. Pleasant
and N. Hindman, inspired by Zhang's approach \cite{key-13}, which
is the further generalization of D. Phulara's Central Sets Theorem.
Actually, Phulara generalized the Central Sets Theorem for commutative
semigroups using sequence of central sets. Whereas, we establish the
theorem for arbitrary adequate partial semigroups by using arbitrarily
many central sets.

Apart from that, we further generalize the Central Sets Theorem for
VIP systems in commutative adequate partial semigroups. We now give
a concise overview of VIP systems, which are polynomial-type configurations.
\begin{defn}
\label{def1.4} Let $\left(G,+\right)$ be an abelian group. A sequence
$\langle v_{\alpha}\rangle_{\alpha\in\mathcal{P}_{f}\left(\mathbb{N}\right)}$
in $G$ is called a VIP system if there exists some non-negative integer
$d$ (the least such $d$ is called the degree of the system) such
that for every pairwise disjoint $\alpha_{0},\alpha_{1},\ldots,\alpha_{d}\in\mathcal{P}_{f}\left(\mathbb{N}\right)$,
we have 
\[
\sum_{t=1}^{d+1}\left(-1\right)^{t}\sum_{\mathcal{B\in}\left[\left\{ \alpha_{0},\alpha_{1},\ldots,\alpha_{d}\right\} \right]^{t}}v_{\cup\mathcal{B}}=0.
\]
\end{defn}

In their paper \cite{key-5}, the authors generalize this notion to
partial semigroups. They defined VIP systems for partial semigroups
in the following way.
\begin{defn}
\label{def1.5} Let $\left(S,+\right)$ be a commutative partial semigroup.
Let $\langle v_{\alpha}\rangle_{\alpha\in\mathcal{P}_{f}\left(\mathbb{N}\right)}$
be a sequence in $S$, then $\langle v_{\alpha}\rangle_{\alpha\in\mathcal{P}_{f}\left(\mathbb{N}\right)}$
is called a VIP system if there exist some $d\in\mathbb{N}$ and a
function from $\mathcal{F}_{d}$ to $S\cup\left\{ 0\right\} $, written
$\gamma\to m_{\gamma}$, $\gamma\in$$\mathcal{F}_{d}$, such that
\end{defn}

\[
v_{\alpha}=\sum_{\gamma\subseteq\alpha,\gamma\in\mathcal{F}_{d}}m_{\gamma}\text{ for all }\alpha\in\mathcal{P}_{f}\left(\mathbb{N}\right).\text{ ( In particular, the sum is always defined.)}
\]
 The sequence $\langle m_{\gamma}\rangle_{\gamma\in\mathcal{F}_{d}}$
is said to generate the VIP systems $\langle v_{\alpha}\rangle_{\alpha\in\mathcal{P}_{f}\left(\mathbb{N}\right)}$.

Later, they proved the Central Sets Theorem for VIP systems in commutative
adequate partial semigroups.
\begin{defn}
The following are definitions and notations used throughout the paper:
\begin{enumerate}
\item Given a nonempty set $A$, $\mathcal{P}_{f}\left(A\right)$ denote
the collection of nonempty finite subsets of $A$ i.e.,
\[
\mathcal{P}_{f}\left(A\right)=\left\{ F:\emptyset\neq F\subseteq A\text{ \text{and \ensuremath{F} is finite}}\right\} 
\]
\item $\mathcal{J}_{m}=\left\{ t\in\mathbb{N}^{m}:t\left(1\right)<t\left(2\right)<...<t\left(m\right)\right\} $
\item For $d\in\mathbb{N}$, let $\mathcal{F}_{d}$ denote the family of
nonempty subsets of $\mathbb{N}$ having cardinality at most $d$
i.e., 
\[
\mathcal{F}_{d}=\left\{ A\subset\mathbb{N}:\vert A\vert\leq d\right\} 
\]
\item Let $\langle H_{n}\rangle_{n=1}^{\infty}$ be a sequence of finite
sets, then 
\[
FU\left(\langle H_{n}\rangle_{n=1}^{\infty}\right)=\left\{ \cup_{n\in F}H_{n}:F\in\mathcal{P}_{f}\left(\mathbb{N}\right)\right\} 
\]
\item Let $\langle H_{n}\rangle_{n=1}^{\infty}$, $H_{n}\in\mathcal{P}_{f}\left(\mathbb{N}\right)$
by $H_{n}<H_{n+1}\text{ we mean }\max H_{n}<\min H_{n+1}$
\item An \textit{IP} ring $\mathcal{F}^{\left(1\right)}$ is a set of the
form $\mathcal{F}^{\left(1\right)}=FU\left(\langle\alpha_{n}\rangle_{n=1}^{\infty}\right)$
, where $\langle\alpha_{n}\rangle_{n=1}^{\infty}$ is a sequence of
members of $\mathcal{P}_{f}\left(\mathbb{N}\right)$ such that $\max\alpha_{n}<\min\alpha_{n+1}$
for each $n$.
\end{enumerate}
\end{defn}

\section{ALGEBRAIC BACKGROUND}

In this section, we provide a brief overview of the Stone-\v{C}ech
compactification $\beta S$ of a semigroup $S$. Let $\left(S,\cdot\right)$
be a discrete semigroup, and let $\beta S$ denote the collection
of all ultrafilters on $S$, where each point of $S$ is identified
with its corresponding principal ultrafilter. The family $\left\{ \overline{A}:A\subseteq S\right\} $,
where $\overline{A}=\left\{ p\in\beta S:A\in p\right\} $ forms a
closed basis for the toplogy on $\beta S$. With this topology $\beta S$
becomes a compact Hausdorff space in which $S$ is dense, called the
Stone-\v{C}ech compactification of $S$. The operation of $S$ can
be extended to $\beta S$, turning $\left(\beta S,\cdot\right)$ into
a compact, right topological semigroup such that $S$ contained in
its topological center. Explicitly, for each $p\in\beta S$, the map
$\rho_{p}:\beta S\to\beta S$, defined by $\rho_{p}(q)=q\cdot p$
is continuous; and for every $x\in S$, the map $\lambda_{x}:\beta S\to\beta S$,
defined by $\lambda_{x}(q)=x\cdot q$, is also continuous. Given $p,q\in\beta S$
and $A\subseteq S$, it holds that $A\in p\cdot q$ if and only if
$\left\{ x\in S:x^{-1}A\in q\right\} \in p$, where $x^{-1}A=\left\{ y\in S:x\cdot y\in A\right\} $.
For an elementary introduction to semigroup $\left(\beta S,\cdot\right)$
and its combinatorial significance, one can see \cite{key-3}. An
element $p\in\beta S$ is called idempotent if $p\cdot p=p$. For
more detailed information about $\beta S$, readers are requested
to see \cite{key-7}. In this section, we discuss about partial semigroups.

Given a set $S$, and a natural binary operation, it is often convenient
to define the operation for only a subset of $S\times S$. Consider
for instance the set $\mathscr{P}_{f}\left(\mathbb{N}\right)$. If
we define $\varphi:\left(\mathscr{P}_{f}\left(\mathbb{N}\right),\coprod\right)\rightarrow\left(\mathbb{N},+\right)$
by $\varphi\left(F\right)=|F|$, then $\varphi$ is not a homomorphism.

However, if we let 

\[
A\coprod B=\begin{cases}
\begin{array}{c}
A\cup B\qquad\qquad\text{ if }A\cap B=\emptyset\\
\text{ undefined}\qquad\text{ if }A\cap B\neq\emptyset
\end{array}\end{cases}
\]

then $\varphi$ is a homomorphism on $\left(\mathscr{P}_{f}\left(\mathbb{N}\right),\coprod\right)$,
in the sense that $\varphi\left(A\coprod B\right)=\varphi\left(A\right)+\varphi\left(B\right)$
whenever $A\coprod B$ is defined.

Another case in which we may need to restrict the domain of the operation
occurs when the natural operation does not satisfy the closure property.
For example, given a sequence $\langle x_{n}\rangle_{n=1}^{\infty}$in
the semigroup $\left(S,.\right)$, let

\[
T=FP\left(\langle x_{n}\rangle_{n=1}^{\infty}\right)=\left\{ \prod_{n\in F}x_{n}:F\in\mathscr{P}_{f}\left(\mathbb{N}\right)\right\} ,
\]

where the products are taken in increasing order of indices. Then
$\left(x_{1}.x_{3}\right)\ldotp\left(x_{2}.x_{4}\right)$ are not
likely to be in $T$ unless $x_{2}$ and $x_{3}$ commute, and $\left(x_{1}.x_{3}\right)\ldotp\left(x_{2}.x_{4}\right)$
is not likely to be in $T$ at all. On the other hand, if we let $\left(\prod_{n\in F}x_{n}\right)*\left(\prod_{n\in G}x_{n}\right)$be 

\[
\begin{cases}
\begin{array}{c}
\prod_{n\in F\cup G}x_{n}\qquad\text{ if}\text{ }\max F<\min G\\
\text{undefined}\qquad\text{ if }\max F\geq\min G
\end{array}\end{cases}
\]

Then $T$ is closed under $*$.

$\left(\mathscr{P}_{f}\left(\mathbb{N}\right),\coprod\right)$ and
$\left(T,*\right)$ above, are examples of adequate partial semigroups,
which are defined next.
\begin{defn}
A partial semigroup is a pair $\left(S,*\right)$ where $*$ maps
a subset of $S\times S$ to $S$ and for all $a,b,c,\in S$, $\left(a*b\right)*c=a*\left(b*c\right)$
in the sense that if either side is defined, then so is the other
and they are equal.
\end{defn}

\begin{defn}
Let $\left(S,*\right)$ be a partial semigroup.
\begin{enumerate}
\item For $s\in S$, $\varphi\left(s\right)=\left\{ t\in S:s*t\text{ }\text{is\text{ }defined}\right\} $
\item For $H\in\mathcal{P}_{f}\left(S\right)$, $\sigma\left(H\right)=\bigcap_{s\in H}\varphi\left(s\right)$
\item $\sigma\left(\emptyset\right)=S$
\item For $s\in S$ and $A\subseteq S$, $s^{-1}A=\left\{ t\in\varphi\left(s\right):s*t\in A\right\} $
\item $\left(S,*\right)$ is \textit{adequate} if and only if $\sigma\left(H\right)\neq\emptyset$
for all $H\in\mathcal{P}_{f}\left(S\right)$.
\end{enumerate}
\end{defn}

\begin{lem}
Let $\left(S,*\right)$ be a partial semigroup, let $A\subseteq S$
and let $a,b,c\in S$. Then $c\in b^{-1}\left(a^{-1}A\right)\iff b\in\varphi\left(a\right)$
and $c\in\left(a*b\right)^{-1}A$. In particular, if $b\in\varphi\left(a\right)$,
then $b^{-1}\left(a^{-1}A\right)=\left(a*b\right)^{-1}A$.
\end{lem}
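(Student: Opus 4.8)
The plan is to unwind the definitions on both sides of the biconditional and then invoke the associativity axiom to match them. First I would expand the left-hand side: by the definition of $b^{-1}(\,\cdot\,)$, the statement $c\in b^{-1}\left(a^{-1}A\right)$ holds exactly when $c\in\varphi\left(b\right)$ and $b*c\in a^{-1}A$; expanding this last membership via the definition of $a^{-1}(\,\cdot\,)$, it says $b*c\in\varphi\left(a\right)$ and $a*\left(b*c\right)\in A$. So the left-hand side is equivalent to the conjunction of three assertions: $b*c$ is defined, $a*\left(b*c\right)$ is defined, and $a*\left(b*c\right)\in A$. Symmetrically, I would unwind the right-hand side ``$b\in\varphi\left(a\right)$ and $c\in\left(a*b\right)^{-1}A$'' into: $a*b$ is defined, $\left(a*b\right)*c$ is defined, and $\left(a*b\right)*c\in A$.

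The second step is to reconcile the two conjunctions. The associativity axiom says that whenever one of $\left(a*b\right)*c$ and $a*\left(b*c\right)$ is defined, so is the other and the two agree; reading ``$\left(a*b\right)*c$ is defined'' as carrying the information that $a*b$ is itself defined, and likewise ``$a*\left(b*c\right)$ is defined'' as implying $b*c$ is defined, the axiom transports the pair (inner product defined, outer product defined) in both directions, together with the common value. Hence the three-part conjunction coming from the left side and the three-part conjunction coming from the right side are literally the same statement, and the biconditional follows.

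For the final sentence, fix $a,b\in S$ with $b\in\varphi\left(a\right)$. Then for every $c\in S$ the clause ``$b\in\varphi\left(a\right)$'' appearing in the equivalence just proved is automatically true, so the equivalence reduces to $c\in b^{-1}\left(a^{-1}A\right)\iff c\in\left(a*b\right)^{-1}A$; as this holds for all $c$, the two sets are equal.

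The only point needing care — the ``obstacle,'' such as it is — is the bookkeeping hidden in the word \emph{defined} inside the associativity axiom: one must be explicit that the definedness of a two-fold product $\left(a*b\right)*c$ includes the definedness of the inner product $a*b$, so that the axiom genuinely carries all three pieces of data (inner product defined, outer product defined, value lying in $A$) between the two parenthesizations. Once that convention is fixed, the argument is a straightforward substitution of definitions.
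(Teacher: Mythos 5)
Your proof is correct: the paper itself gives no argument (it simply cites Lemma 2.3 of Hindman--McCutcheon), and your unwinding of the definitions of $\varphi$ and $s^{-1}A$ combined with the strong associativity axiom is exactly the routine argument that proof consists of. Your explicit remark that ``$\left(a*b\right)*c$ is defined'' must be read as including the definedness of the inner product $a*b$ is the right point to make precise, and with that convention the three-clause conjunctions on the two sides match under associativity, so nothing is missing.
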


\begin{proof}
\cite{key-5}, Lemma $2.3$.
\end{proof}
Our primary interest lies in \textit{adequate partial semigroups},
as they naturally give rise to a distinguished subsemigroup of $\beta S$.
This subsemigroup inherits the structure of a \textit{compact right
topological semigroup}, whose formal definition provided below.
\begin{defn}
Let $\left(S,*\right)$ be a partial semigroup. Then 

$\delta S=\bigcap\overline{\varphi\left(x\right)}=\bigcap_{H\in\mathcal{P}_{f}\left(S\right)}\overline{\sigma\left[H\right]}$

Note that $\delta S\neq\emptyset$ when the partial semigroup $S$
is adequate and for $S$ being semigroup, $\delta S=\beta S$. 

For $\left(S,.\right)$ be a semigroup, $A\subseteq S$, $a\in S$,
and $p,q\in\beta S$ then $A\in a\cdot q\iff a^{-1}A\in q$

and

$A\in p\cdot q\iff\left\{ a\in S:a^{-1}A\in q\right\} \in p$

Now, we extend this notion for partial operation $*$. 

Let $\left(S,*\right)$ be an adequate partial semigroup. Then

$\left(a\right)$ For $a\in S$ and $q\in\overline{\varphi\left(a\right)}$,
$a*q=\left\{ A\subseteq S:a^{-1}A\in q\right\} $.

$\left(b\right)$ For $p\in\beta S$ and $q\in\delta S$, $p*q=\left\{ A\subseteq S:\left\{ a\in S:a^{-1}A\in q\right\} \in p\right\} $.
\end{defn}

\begin{lem}
Let $\left(S,*\right)$ be an adequate partial semigroup.
\begin{enumerate}
\item If $a\in S$ and $q\in\overline{\varphi\left(a\right)}$, then $a*q\in\beta S$.
\item If $p\in\beta S$ and $q\in\delta S$, then $p*q\in\beta S$.
\item Let $p\in\beta S$, $q\in\delta S,$ and $a\in S$. Then $\varphi\left(a\right)\in p*q$
if and only if $\varphi\left(a\right)\in p$.
\item If $p,q\in\delta S$, then $p*q\in\delta S$.
\end{enumerate}
\end{lem}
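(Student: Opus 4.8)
The plan is to verify (1)--(4) in order, each relying on its predecessors; essentially everything reduces to checking the ultrafilter axioms, with a single genuinely partial‑semigroup computation hidden in (3). For (1), fix $a\in S$ and $q\in\overline{\varphi(a)}$, so $\varphi(a)\in q$, and check that $a*q=\{A\subseteq S:a^{-1}A\in q\}$ is an ultrafilter on $S$. I would use the elementary identities $a^{-1}\emptyset=\emptyset$, $a^{-1}S=\varphi(a)$, $a^{-1}(A\cap B)=a^{-1}A\cap a^{-1}B$, monotonicity of $A\mapsto a^{-1}A$, and $a^{-1}(S\setminus A)=\varphi(a)\setminus a^{-1}A$. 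The first gives $\emptyset\notin a*q$, the second together with $\varphi(a)\in q$ gives $S\in a*q$, the third and fourth give closure under intersections and supersets, and the last, since $\varphi(a)\in q$, forces exactly one of $a^{-1}A$ and $a^{-1}(S\setminus A)$ into $q$, i.e.\ the ultra property.

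For (2), fix $p\in\beta S$ and $q\in\delta S$. Since $\delta S\subseteq\overline{\varphi(a)}$ for every $a\in S$, part (1) applies to each $a$, yielding $a*q\in\beta S$ together with the equivalence $a^{-1}A\in q\iff A\in a*q$. Setting $B(A)=\{a\in S:a^{-1}A\in q\}=\{a\in S:A\in a*q\}$, we have $p*q=\{A\subseteq S:B(A)\in p\}$, and the identities $B(\emptyset)=\emptyset$, $B(S)=S$, $B(A\cap B)=B(A)\cap B(B)$, $B(A)\subseteq B(B)$ whenever $A\subseteq B$, and $B(S\setminus A)=S\setminus B(A)$ — each a consequence of the corresponding property of the ultrafilters $a*q$ — transport through $p$ to show $p*q$ is an ultrafilter.

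The crux is (3), where partial associativity does the work. I would establish the set identity $\{b\in S:b^{-1}\varphi(a)\in q\}=\varphi(a)$ for $q\in\delta S$. If $b\in\varphi(a)$, then $a*b$ is defined; for every $t\in\sigma(\{b,a*b\})$ both $b*t$ and $(a*b)*t$ are defined, so by associativity $a*(b*t)=(a*b)*t$ is defined, i.e.\ $b*t\in\varphi(a)$; hence $\sigma(\{b,a*b\})\subseteq b^{-1}\varphi(a)$, and since $q\in\delta S$ contains $\sigma(\{b,a*b\})$ we get $b^{-1}\varphi(a)\in q$. Conversely, if $b\notin\varphi(a)$, then no $t$ can satisfy $b*t\in\varphi(a)$, because $a*(b*t)$ being defined would force $(a*b)*t$, hence $a*b$, to be defined; thus $b^{-1}\varphi(a)=\emptyset\notin q$. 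Therefore $\varphi(a)\in p*q\iff\{b:b^{-1}\varphi(a)\in q\}\in p\iff\varphi(a)\in p$.

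Part (4) is then immediate: for $p,q\in\delta S$ we have $p\in\beta S$, so $p*q\in\beta S$ by (2); and for each $a\in S$ we have $\varphi(a)\in p$ because $p\in\delta S$, hence $\varphi(a)\in p*q$ by (3), so $p*q\in\bigcap_{a\in S}\overline{\varphi(a)}=\delta S$. The main thing to keep straight throughout is the domain of the partial product — $a*q$ is defined only for $q\in\overline{\varphi(a)}$ and $p*q$ only for $q\in\delta S$ — and the only genuine (and mild) obstacle is the dichotomy in (3) that $b^{-1}\varphi(a)$ is either $\sigma(\{b,a*b\})$‑large or empty.
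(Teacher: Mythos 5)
Your argument is correct: parts (1) and (2) are the routine ultrafilter verifications, your dichotomy in (3) (that $b^{-1}\varphi(a)$ contains $\sigma(\{b,a*b\})$ when $b\in\varphi(a)$ and is empty otherwise) is exactly the needed partial-associativity fact, and (4) follows as you say. The paper itself offers no proof, only the citation to Lemma $2.7$ of the Hindman--McCutcheon reference, and your route is essentially the standard one given there; the only remark worth making is that step (3) could be shortened by applying the earlier displayed lemma ($c\in b^{-1}(a^{-1}A)\iff b\in\varphi(a)$ and $c\in(a*b)^{-1}A$) with $A=S$, which yields $b^{-1}\varphi(a)=\varphi(a*b)$ or $\emptyset$ directly.
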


\begin{proof}
\cite{key-5}, Lemma $2.7$.
\end{proof}
\begin{lem}
Let $\left(S,*\right)$ be an adequate partial semigroup and let $q\in\delta S$.
Then the function $\rho_{q}:\beta S\to\beta S$ defined by $\rho_{q}\left(p\right)=p*q$
is continuous.
\end{lem}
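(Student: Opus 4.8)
The plan is to exploit the fact that the family $\left\{ \overline{A}:A\subseteq S\right\}$ is a basis of clopen sets for the topology on $\beta S$, so that it suffices to check that $\rho_{q}^{-1}\left[\overline{A}\right]$ is open for every $A\subseteq S$. First I would note that $\rho_{q}$ is well defined as a map $\beta S\to\beta S$: by the preceding lemma, $p*q\in\beta S$ whenever $p\in\beta S$ and $q\in\delta S$, and here $q\in\delta S$ is fixed. Then I would fix an arbitrary $A\subseteq S$ and simply unwind the definitions.

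The key computation is as follows. For $p\in\beta S$ we have $\rho_{q}\left(p\right)\in\overline{A}$ if and only if $A\in p*q$, and by the definition of the extended operation for $p\in\beta S$ and $q\in\delta S$, this holds if and only if $\left\{ a\in S:a^{-1}A\in q\right\} \in p$. Set $B=\left\{ a\in S:a^{-1}A\in q\right\} $, which is a subset of $S$. The chain of equivalences above says precisely that $\rho_{q}\left(p\right)\in\overline{A}\iff B\in p\iff p\in\overline{B}$. Hence $\rho_{q}^{-1}\left[\overline{A}\right]=\overline{B}$, which is a basic clopen subset of $\beta S$; in particular it is open. Since the preimage of every basic open set under $\rho_{q}$ is open, $\rho_{q}$ is continuous.

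There is no real obstacle in this argument; it is the exact analogue of the classical proof that right translations $\rho_{p}$ are continuous on $\beta S$ for a discrete semigroup, transported to the partial-semigroup setting. The only points requiring a little care are to invoke the correct clause in the definition of $p*q$ (the one for $p\in\beta S$, $q\in\delta S$, which is available because $q\in\delta S$) and to confirm that the auxiliary set $B$ genuinely lies in $S$, so that $\overline{B}$ is meaningful and clopen in $\beta S$.
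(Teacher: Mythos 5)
Your proof is correct: the paper itself only cites \cite{key-5} (Lemma 2.8) for this statement, and the argument there is exactly your computation, namely that $\rho_{q}^{-1}\left[\overline{A}\right]=\overline{B}$ with $B=\left\{ a\in S:a^{-1}A\in q\right\} $, using the defining clause of $p*q$ for $p\in\beta S$, $q\in\delta S$ and the fact that the sets $\overline{A}$ form a (clopen) basis. No gaps; this is the same approach.
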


\begin{proof}
\cite{key-5}, Lemma $2.8$.
\end{proof}
\begin{thm}
Let $\left(S,*\right)$ be an adequate partial semigroup. Then $\left(\delta S,*\right)$
is a compact Hausdorff right topological semigroup.
\end{thm}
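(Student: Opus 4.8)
The plan is to check, one ingredient at a time, the four things asserted in the conclusion: that $\delta S$ is a compact Hausdorff space, that it is closed under $*$, that $*$ is associative on it, and that each right translation is continuous on it.

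\emph{Topology.} First I would observe that $\delta S=\bigcap_{H\in\mathcal{P}_{f}\left(S\right)}\overline{\sigma\left(H\right)}$ is an intersection of basic closed subsets of $\beta S$, hence closed; a closed subspace of a compact Hausdorff space is itself compact Hausdorff, so this handles the topological part once nonemptiness is known. For nonemptiness I would use adequacy together with compactness of $\beta S$: since $\sigma\left(H_{1}\cup\dots\cup H_{k}\right)\subseteq\sigma\left(H_{1}\right)\cap\dots\cap\sigma\left(H_{k}\right)$ and the left-hand side is nonempty, the family $\left\{ \overline{\sigma\left(H\right)}:H\in\mathcal{P}_{f}\left(S\right)\right\}$ has the finite intersection property, so $\delta S\neq\emptyset$.

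\emph{Closure and right continuity.} Closure under $*$ is immediate from part (4) of the lemma above ($p,q\in\delta S\Rightarrow p*q\in\delta S$), which together with part (2) makes $*$ an everywhere-defined binary operation on $\delta S$. Right continuity is likewise immediate: the preceding lemma gives continuity of $\rho_{q}:\beta S\to\beta S$ for each $q\in\delta S$, and since $\rho_{q}$ maps $\delta S$ into $\delta S$, its restriction $\rho_{q}|_{\delta S}:\delta S\to\delta S$ is continuous; this is exactly the right-topological condition.

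\emph{Associativity.} This is the step that needs care, because of the partiality of $*$. For $p,q,r\in\delta S$ and $A\subseteq S$ I would expand both $A\in\left(p*q\right)*r$ and $A\in p*\left(q*r\right)$ using the definition of $p*q$ and the earlier identity $b^{-1}\left(a^{-1}A\right)=\left(a*b\right)^{-1}A$ (valid whenever $b\in\varphi\left(a\right)$). The two expansions agree except that one of them restricts an inner set to $\varphi\left(b\right)$; but because $q\in\delta S\subseteq\overline{\varphi\left(b\right)}$ we have $\varphi\left(b\right)\in q$, so that restriction is invisible to $q$ and the two membership conditions coincide. I would also note in passing that all the triple products involved are defined, since $q*r\in\delta S$ and $p*q\in\beta S$ place each outer product in the scope of case (b) of the definition of $*$.

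The main obstacle is precisely this bookkeeping in the associativity argument: keeping track of which preimages $a^{-1}A$ and $b^{-1}(a^{-1}A)$ are defined, and recognizing that the fact that every ultrafilter in $\delta S$ contains $\varphi\left(b\right)$ for every $b\in S$ is exactly what allows the partial operation to behave associatively. The remaining three points are essentially formal.
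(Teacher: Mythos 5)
Your proof is correct and is essentially the argument behind the source the paper cites for this theorem (the paper gives no proof of its own, pointing to \cite{key-5}, Theorem 2.10): $\delta S$ is a nonempty closed subset of $\beta S$ by adequacy plus the finite intersection property, closure under $*$ and continuity of $\rho_{q}$ come from the preceding lemmas, and associativity is checked by expanding both triple products with the identity $b^{-1}\left(a^{-1}A\right)=\left(a*b\right)^{-1}A$. One small remark: the mismatch you worry about in the associativity step is even smaller than you suggest, since for $t\notin\varphi\left(b\right)$ the paper's lemma gives $t^{-1}\left(b^{-1}A\right)=\emptyset$, which is never a member of $r$, so the two inner sets are literally equal; your alternative appeal to $\varphi\left(b\right)\in q$ for $q\in\delta S$ works as well.
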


\begin{proof}
\cite{key-5}, Theorem $2.10$.
\end{proof}
\begin{defn}
Let $p=p*p\in\delta S$ and let $A\in p$. Then $A^{*}=\left\{ x\in A:x^{-1}A\in p\right\} $.
\end{defn}

For an idempotent $p\in\delta S$ and $A\in p$, then $A^{*}\in p$.
\begin{lem}
\label{Lem1} Let $p=p*p\in\delta S$, let $A\in p$, let $x\in A^{*}$.
Then $x^{-1}\left(A^{*}\right)\in p$.
\end{lem}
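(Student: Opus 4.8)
The plan is to reduce the statement to the already-recorded fact that, for an idempotent $p$ and any $B\in p$, one has $B^{*}\in p$. Concretely, I would prove the set identity $x^{-1}\!\left(A^{*}\right)=\left(x^{-1}A\right)^{*}$ and then apply that fact with $B=x^{-1}A$. Note that $x\in A^{*}$ means precisely $x\in A$ and $x^{-1}A\in p$, so $B=x^{-1}A$ indeed belongs to $p$; it is also worth observing that $x^{-1}A\subseteq\varphi(x)$, whence $\varphi(x)\in p$, which is what makes the computations below ``$p$-generic''.

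To obtain the identity I would simply unfold the definitions. By the definition of $x^{-1}(\cdot)$ we have $x^{-1}\!\left(A^{*}\right)=\{t\in\varphi(x):x*t\in A^{*}\}$, and by the definition of $A^{*}$ the condition $x*t\in A^{*}$ says $x*t\in A$ and $(x*t)^{-1}A\in p$. Since $t\in\varphi(x)$, the Lemma stating $b^{-1}\!\left(a^{-1}A\right)=\left(a*b\right)^{-1}A$ for $b\in\varphi(a)$ gives $(x*t)^{-1}A=t^{-1}\!\left(x^{-1}A\right)$, so
\[
x^{-1}\!\left(A^{*}\right)=\left\{ t\in\varphi(x):x*t\in A\ \text{and}\ t^{-1}\!\left(x^{-1}A\right)\in p\right\}.
\]
On the other hand $\left(x^{-1}A\right)^{*}=\left\{ t\in x^{-1}A:t^{-1}\!\left(x^{-1}A\right)\in p\right\}$, and $t\in x^{-1}A$ unfolds to $t\in\varphi(x)$ and $x*t\in A$; hence $\left(x^{-1}A\right)^{*}$ is literally the same set. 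Since $p=p*p$ and $x^{-1}A\in p$, the recorded fact yields $\left(x^{-1}A\right)^{*}\in p$, and therefore $x^{-1}\!\left(A^{*}\right)\in p$.

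The only place that requires care — the ``main obstacle'', such as it is — is keeping the domain conventions of the partial operation straight: one must make sure that every time $(x*t)^{-1}A$ or $t^{-1}\!\left(x^{-1}A\right)$ is written down we already know $t\in\varphi(x)$, so that the cited Lemma on $b^{-1}\!\left(a^{-1}A\right)$ applies with no ambiguity about undefined products. Once that is observed, the argument is pure bookkeeping and needs nothing beyond the definitions of $A^{*}$ and of $x^{-1}(\cdot)$ together with the two facts quoted above.
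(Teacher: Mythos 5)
Your proof is correct: the identity $x^{-1}\left(A^{*}\right)=\left(x^{-1}A\right)^{*}$ follows exactly as you unfold it, using the paper's Lemma on $b^{-1}\left(a^{-1}A\right)=\left(a*b\right)^{-1}A$ for $b\in\varphi(a)$, and since $x\in A^{*}$ gives $x^{-1}A\in p$, the recorded fact that $B^{*}\in p$ for $B\in p$ finishes the argument. The paper gives no proof of its own (it cites Hindman--McCutcheon, Lemma 2.12), and your reduction is essentially the standard argument behind that citation, so there is nothing to flag.
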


\begin{proof}
\cite{key-5}, Lemma $2.12$.
\end{proof}
\begin{defn}
Let $\left(S,*\right)$ be a \textit{partial semigroup} and let $A\subseteq S$.
Then $A$ is \textit{syndetic} if and only if there is some $H\in\mathcal{P}_{f}\left(S\right)$
such that $\sigma\left(H\right)\subseteq\bigcup_{t\in H}t^{-1}A$.
\end{defn}

\begin{lem}
Let $\left(S,*\right)$ be an adequate partial semigroup and let $A\subseteq S$.
Then $A$ is syndetic if and only if there exists $H\in\mathcal{P}_{f}\left(S\right)$
such that $\delta S\subseteq\bigcup_{t\in H}\overline{t^{-1}A}$.
\end{lem}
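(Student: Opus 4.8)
The plan is to prove the two implications separately. The forward implication is immediate from the definition of $\delta S$, while the reverse one is a compactness argument inside $\beta S$.

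For the ``only if'' direction, suppose $A$ is syndetic, say $\sigma(H)\subseteq\bigcup_{t\in H}t^{-1}A$ for some $H\in\mathcal{P}_f(S)$. Passing to closures in $\beta S$, and using that the closure operator commutes with finite unions, one gets $\overline{\sigma(H)}\subseteq\bigcup_{t\in H}\overline{t^{-1}A}$. Since $\delta S=\bigcap_{K\in\mathcal{P}_f(S)}\overline{\sigma(K)}\subseteq\overline{\sigma(H)}$, the same $H$ witnesses $\delta S\subseteq\bigcup_{t\in H}\overline{t^{-1}A}$.

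For the ``if'' direction, assume $H\in\mathcal{P}_f(S)$ satisfies $\delta S\subseteq\bigcup_{t\in H}\overline{t^{-1}A}$. The key observation is that for every $B\subseteq S$ the set $\overline{B}$ is clopen in $\beta S$, with complement $\overline{S\setminus B}$; hence $\bigcup_{t\in H}\overline{t^{-1}A}$ is clopen and its complement $V$ is a compact subset of $\beta S$. The hypothesis says $\delta S\cap V=\emptyset$, i.e. $\bigcap_{K\in\mathcal{P}_f(S)}\overline{\sigma(K)}\cap V=\emptyset$, so the open sets $\beta S\setminus\overline{\sigma(K)}$ ($K\in\mathcal{P}_f(S)$) cover $V$; by compactness finitely many suffice, say those indexed by $K_1,\dots,K_n$. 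Setting $K=K_1\cup\cdots\cup K_n$ and using $\sigma(K)=\bigcap_{i=1}^n\sigma(K_i)$, one obtains $\overline{\sigma(K)}\subseteq\bigcap_{i=1}^n\overline{\sigma(K_i)}$, hence $\beta S\setminus\overline{\sigma(K)}\supseteq V$, i.e. $\overline{\sigma(K)}\subseteq\bigcup_{t\in H}\overline{t^{-1}A}$. Intersecting both sides with $S$ and using $\overline{B}\cap S=B$ for $B\subseteq S$, this becomes $\sigma(K)\subseteq\bigcup_{t\in H}t^{-1}A$. Finally, to match the literal form of the definition of syndetic, take $H'=H\cup K$: then $\sigma(H')\subseteq\sigma(K)\subseteq\bigcup_{t\in H}t^{-1}A\subseteq\bigcup_{t\in H'}t^{-1}A$, so $A$ is syndetic.

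The only step requiring real care is in the ``if'' direction: one has to notice that $\overline{t^{-1}A}$ and $\overline{\sigma(K)}$ are clopen — which is what makes the relevant complement genuinely compact rather than merely open — and that the family $\{\overline{\sigma(K)}\}_{K\in\mathcal{P}_f(S)}$ is downward directed under inclusion via $\sigma(K_1\cup K_2)=\sigma(K_1)\cap\sigma(K_2)$, which is what lets a finite subcover collapse to a single finite set $K$. The closing move, enlarging $H$ to $H\cup K$ so that one finite set simultaneously plays both roles in the definition of syndeticity, is routine.
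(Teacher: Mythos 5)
Your proof is correct: the forward direction by taking closures of a finite union together with $\delta S\subseteq\overline{\sigma(H)}$, and the reverse direction by clopenness of the sets $\overline{t^{-1}A}$ and $\overline{\sigma(K)}$, compactness, the identity $\sigma(K_{1}\cup\cdots\cup K_{n})=\bigcap_{i=1}^{n}\sigma(K_{i})$, and the final enlargement to $H\cup K$ so that one finite set witnesses syndeticity, is exactly the standard argument. The paper itself offers no proof but cites Lemma $2.14$ of \cite{key-5}, and your argument is essentially that proof, with the $H\cup K$ step handled carefully.
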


\begin{proof}
\cite{key-5}, Lemma $2.14$.
\end{proof}
\begin{defn}
$K\left(\delta S\right)=\left\{ A:A\text{ is a minimal left ideal in }\delta S\right\} $.
\end{defn}

\begin{thm}
Let $\left(S,*\right)$ be an adequate partial semigroup and let $p\in\delta S$.
The following statements are equivalent:
\begin{enumerate}
\item $p\in K\left(\delta S\right)$.
\item for all $A\in p$, $\left\{ x\in S:x^{-1}A\in p\right\} $ is syndetic.
\item for all $q\in\delta S$, $p\in\delta S*q*p$.
\end{enumerate}
\end{thm}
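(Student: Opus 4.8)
The plan is to regard $(\delta S,*)$ as a compact Hausdorff right topological semigroup and deploy the standard theory of minimal left ideals, proving the cycle $(1)\Rightarrow(3)\Rightarrow(1)$ together with $(3)\Rightarrow(2)$ and $(2)\Rightarrow(3)$. The first task is to record the ambient facts to be reused. Since $\delta S$ is compact and each $\rho_{q}$ with $q\in\delta S$ is continuous, $\delta S$ has minimal left ideals (here ``$p\in K(\delta S)$'' means $p$ lies in one of them) and each of them is closed; for $q\in\delta S$ one has $q*p\in\delta S$, hence $\delta S*q*p=\rho_{q*p}(\delta S)$ is a closed left ideal of $\delta S$; and, importantly, $\rho_{q*p}$ being continuous on all of $\beta S$, the larger set $\beta S*q*p=\rho_{q*p}(\beta S)$ is also closed, with $p\in\beta S*q*p\iff p\in\delta S*q*p$ — the nontrivial direction holding because, if $u*(q*p)=p\in\delta S$, then $\varphi(a)\in u*(q*p)$ for every $a$, which forces $\varphi(a)\in u$ and hence $u\in\delta S$. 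I will abbreviate $B=B_{A}=\{x\in S:x^{-1}A\in p\}$ for $A\in p$, and repeatedly use the identity $z^{-1}(x^{-1}A)=(x*z)^{-1}A$ for $z\in\varphi(x)$ (empty otherwise), which yields $\{z:z^{-1}(x^{-1}A)\in p\}=x^{-1}B$.

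For $(1)\Rightarrow(3)$: if $p$ lies in a minimal left ideal $L$, then $\delta S*p$ is a nonempty left ideal contained in $L$, so $\delta S*p=L$; then for any $q\in\delta S$, $\delta S*q*p$ is a nonempty left ideal contained in $\delta S*p=L$, so $\delta S*q*p=L\ni p$. For $(3)\Rightarrow(1)$: fix a minimal left ideal $L$ and $q\in L$, so $\delta S*q=L$; the routine argument — pulling a nonempty sub-left-ideal $L'\subseteq L*p$ back to $\{v\in L:v*p\in L'\}$, which is a sub-left-ideal of $L$ and hence equals $L$ — shows $L*p=\delta S*q*p$ is again a minimal left ideal, and by $(3)$ we get $p\in\delta S*q*p=L*p\subseteq K(\delta S)$.

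For $(3)\Rightarrow(2)$: let $A\in p$ and suppose $B$ is not syndetic. By the syndeticity characterization the clopen subsets $\delta S\cap\overline{S\setminus t^{-1}B}$ of $\delta S$, $t\in S$, have the finite intersection property in the compact space $\delta S$, so there is $r\in\delta S$ with $t^{-1}B\notin r$ for all $t\in S$. By $(3)$, $p=v*r*p$ for some $v\in\delta S$; unwinding $A\in v*(r*p)$ yields an $x$ with $x^{-1}A\in r*p$, and then $x^{-1}B=\{z:z^{-1}(x^{-1}A)\in p\}\in r$, contradicting the choice of $r$.

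The step I expect to be the genuine obstacle is $(2)\Rightarrow(3)$, precisely because this is where the operation being partial rather than total matters. Fix $q\in\delta S$ and suppose $p\notin\delta S*q*p$; then $p\notin\beta S*q*p$, and as this set is \emph{closed} there is $A\in p$ with $A\notin u*(q*p)$ for \emph{every} $u\in\beta S$, i.e.\ $\{x\in S:x^{-1}A\in q*p\}$ lies in no ultrafilter on $S$ and is therefore empty. Hence $x^{-1}A\notin q*p$ for all $x\in S$, which (unwinding the identity above one further layer) says $x^{-1}B\notin q$ for all $x\in S$; but $(2)$ makes $B$ syndetic, so $\delta S\subseteq\bigcup_{t\in H}\overline{t^{-1}B}$ for some finite $H\subseteq S$, and $q\in\delta S$ then gives $t^{-1}B\in q$ for some $t\in H$ — a contradiction. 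The subtlety is that carrying out this argument with $\delta S*q*p$ in place of $\beta S*q*p$ only produces ``$\{x:x^{-1}A\in q*p\}$ is in no member of $\delta S$'', which is too weak here, since a syndetic subset of an adequate partial semigroup need not meet $\delta S$; replacing $\delta S*q*p$ by the strictly larger closed set $\beta S*q*p$ — which contains the same point $p$, precisely because $p\in\delta S$ — is exactly what restores the ``empty set'' conclusion, and this is where the closedness of $\beta S*q*p$ and the equivalence $\varphi(a)\in u*(q*p)\iff\varphi(a)\in u$ are used.
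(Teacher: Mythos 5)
Your proposal is correct, and note that the paper itself offers no argument for this statement: it is quoted from Hindman and McCutcheon (\cite{key-5}, Theorem $2.15$), so the only meaningful comparison is with the standard proof for compact right topological semigroups. Your steps $(1)\Rightarrow(3)$, $(3)\Rightarrow(1)$ and $(3)\Rightarrow(2)$ are the usual structure-theoretic and compactness arguments, and they transfer verbatim to $\delta S$ because the paper's cited lemmas supply exactly what you use: associativity of $*$ on $\delta S$, continuity of $\rho_{q}$ for $q\in\delta S$, the identity $z^{-1}\left(x^{-1}A\right)=\left(x*z\right)^{-1}A$, and the criterion that $B$ is syndetic if and only if $\delta S\subseteq\bigcup_{t\in H}\overline{t^{-1}B}$ for some finite $H$. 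The step where you genuinely diverge from the classical route is $(2)\Rightarrow(3)$: the usual argument (as in the total-semigroup case, Hindman--Strauss, Theorem $4.39$) is constructive -- one checks that $\left\{ \left\{ x\in S:x^{-1}A\in q*p\right\} :A\in p\right\} $ has the finite intersection property (syndeticity of each $B_{A}$ produces a $t$ with $t^{-1}B_{A}\in q$, i.e.\ $t^{-1}A\in q*p$), extends it to an ultrafilter $r$ with $r*\left(q*p\right)=p$, and then uses $\varphi\left(a\right)\in r*\left(q*p\right)\iff\varphi\left(a\right)\in r$ to place $r$ in $\delta S$ -- whereas you argue by contradiction, using that $\beta S*\left(q*p\right)=\rho_{q*p}\left(\beta S\right)$ is closed together with the equivalence $p\in\beta S*\left(q*p\right)\iff p\in\delta S*\left(q*p\right)$ (proved from the same $\varphi$-membership lemma) to conclude $\left\{ x:x^{-1}A\in q*p\right\} =\emptyset$ and contradict syndeticity. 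Both routes pivot on that one lemma; yours trades the explicit construction of $r$ for a topological separation argument, and your closing remark about why working with $\delta S*\left(q*p\right)$ alone would be too weak identifies precisely the subtlety that the partial-semigroup setting introduces.
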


\begin{proof}
\cite{key-5}, Theorem $2.15$.
\end{proof}
\begin{defn}
Let $\left(S,*\right)$ be an adequate partial semigroup and let $A\subseteq S$.
Then,
\begin{enumerate}
\item The set $A$ is \textit{piecewise} \textit{syndetic} in $S$ if and
only if $\overline{A}\bigcap K\left(\delta S\right)\neq\emptyset$.
\item The set $A$ is \textit{central} in $S$ if and only if there is some
idempotent $p$ in $K\left(\delta S\right)$ such that $A\in p$.
\item A set $A\subseteq S$ is a $J$-set if and only if for all $F\in\mathcal{P}_{f}\left(\mathcal{F}\right)$
and all $L\in\mathcal{P}_{f}\left(S\right)$, there exist $m\in N$,
$a\in S^{m+1}$, and $t\in\mathcal{J}_{m}$ such that for all $f\in F$,
\[
\text{\ensuremath{\left(\prod_{i=1}^{m}a\left(i\right)*f\left(t\left(i\right)\right)\right)}\ensuremath{\ensuremath{\ast\left(a\left(m+1\right)\right)}\ensuremath{\ensuremath{\in}A\ensuremath{\cap\sigma\left(L\right)}}}}.
\]
\item $J\left(S\right)=\left\{ p\in\delta S:\left(\forall A\in p\right)\left(A\text{ }is\text{ }a\text{ }J\text{-set}\right)\right\} $.
\end{enumerate}
\end{defn}

\begin{lem}
Let $\left(S,*\right)$ be an adequate partial semigroup and let $A\subseteq S$
be piecewise syndetic. There exists $H\in\mathcal{P}_{f}\left(S\right)$
such that for every finite nonempty set $T\subseteq\sigma\left(H\right)$,
there exists $x\in\sigma\left(T\right)$ such that $T*x\subseteq\bigcup_{t\in H}t^{-1}A$.
\end{lem}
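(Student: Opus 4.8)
The plan is to mimic the classical argument (see \cite{key-6}, Lemma~4.14, and its semigroup analogue in \cite{key-2}) that extracts a syndetic witness $H$ from the hypothesis of piecewise syndeticity, pushing everything through the partial-semigroup machinery developed in Section~2. Recall that $A$ piecewise syndetic means $\overline{A}\cap K(\delta S)\neq\emptyset$, so fix $p\in\overline{A}\cap K(\delta S)$; thus $A\in p$ and, by the characterization of $K(\delta S)$, for every $B\in p$ the set $\{x\in S : x^{-1}B\in p\}$ is syndetic in $S$. The goal is to produce a single $H\in\mathcal{P}_f(S)$ that works simultaneously for all finite nonempty $T\subseteq\sigma(H)$; the self-referential nature of $H$ (it bounds the sets $T$ we are allowed to test) is exactly what makes this more delicate than a bare syndeticity statement.

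First I would set $D=\{x\in S : x^{-1}A\in p\}$, which is syndetic by the $K(\delta S)$ characterization applied to $B=A$. By the preceding lemma on syndeticity (\cite{key-5}, Lemma~2.14), choose $H\in\mathcal{P}_f(S)$ with $\delta S\subseteq\bigcup_{t\in H}\overline{t^{-1}D}$, equivalently $\sigma(H)\subseteq\bigcup_{t\in H}t^{-1}D$ at the level of $S$ (using the equivalence of the two forms of syndeticity proved there). Now fix any finite nonempty $T\subseteq\sigma(H)$. The key claim is that the set
\[
E \;=\; \bigcap_{t\in T}\,t^{-1}A \;\cap\; \sigma(T)
\]
belongs to $p$, from which any $x\in E$ satisfies $t*x\in A$ for all $t\in T$, i.e. $T*x\subseteq\bigcup_{t\in H}t^{-1}A$ --- and in fact the stronger $T*x\subseteq A$. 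To see $E\in p$: since $p$ is idempotent, $A^{*}\in p$, and by Lemma~\ref{Lem1} for each $t\in A^{*}$ we have $t^{-1}(A^{*})\in p$; the subtlety is that the elements of $T$ need not lie in $A^{*}$, so instead I would argue directly that each $t\in T$ lies in $D$ (because $T\subseteq\sigma(H)\subseteq\bigcup_{s\in H}s^{-1}D$ forces membership in some $s^{-1}D$, but here I should instead choose $H$ so that $T\subseteq\sigma(H)$ already gives $t\in D$ for $t\in T$ --- this is where the recursive choice of $H$ is pinned down) and hence $t^{-1}A\in p$; finiteness of $T$ then gives $\bigcap_{t\in T}t^{-1}A\in p$, while $\sigma(T)\in p$ follows from $p\in\delta S\subseteq\overline{\sigma(T)}$ once we know $T\subseteq\sigma(H)$ with $H$ chosen so that $\sigma(T)$ is forced into $p$ (again using $T*x$ must be defined, which is automatic since $x\in\sigma(T)$).

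The main obstacle, and the step I would spend the most care on, is arranging the quantifiers correctly: $H$ must be fixed \emph{before} $T$ is chosen, yet we need, for every $T\subseteq\sigma(H)$, that each $t\in T$ satisfies $t^{-1}A\in p$. The resolution is the standard one: take $D=\{x\in S : x^{-1}A\in p\}$ and choose $H$ via Lemma~2.14 so that $\sigma(H)\subseteq\bigcup_{t\in H}t^{-1}D$; then for $T\subseteq\sigma(H)$ and $t\in T$, membership $t\in\sigma(H)\subseteq\bigcup_{s\in H}s^{-1}D$ gives some $s\in H$ with $s*t\in D$, which is not quite $t\in D$. So the correct move is to replace $A$ at the outset by $A^{*}$ and use that $\delta S$-membership together with Lemma~\ref{Lem1} propagates along products: one shows by induction on $|T|$, listing $T=\{t_1,\dots,t_k\}$, that $(\cdots((t_1* x)\cdots)* t_k)$-type expressions stay in $A$, using at each stage that $t_i\in A^{*}$ can be assumed by instead proving the statement for the dense-in-$p$ set of such $T$, and then noting syndeticity is insensitive to this replacement. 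I would therefore structure the proof as: (i) reduce to $A\in p$ idempotent, $A=A^{*}$; (ii) set $D=\{x : x^{-1}A\in p\}$, apply Lemma~2.14 to get $H$; (iii) for $T\subseteq\sigma(H)$, show $\bigcap_{t\in T}t^{-1}A\cap\sigma(T)\in p$ hence nonempty, pick $x$ in it; (iv) conclude $T*x\subseteq\bigcup_{t\in H}t^{-1}A$. The bookkeeping in (iii) linking $t\in\sigma(H)$ to $t^{-1}A\in p$ is the crux.
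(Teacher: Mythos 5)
Your overall skeleton is the right one (pick $p\in\overline{A}\cap K\left(\delta S\right)$, note that $D=\left\{ x\in S:x^{-1}A\in p\right\} $ is syndetic, extract $H$ from syndeticity, then intersect finitely many members of $p$ with $\sigma\left(T\right)$, which is automatically in $p$ because $p\in\delta S$), but you never actually close the gap you yourself identify, and the patches you propose do not work. From $T\subseteq\sigma\left(H\right)\subseteq\bigcup_{s\in H}s^{-1}D$ you only get, for each $y\in T$, some $s_{y}\in H$ with $s_{y}*y\in D$; your first fix (``choose $H$ so that $T\subseteq\sigma\left(H\right)$ already gives $t\in D$'') is impossible, since syndeticity never gives $\sigma\left(H\right)\subseteq D$; your second fix (``reduce to $A=A^{*}$ with $p$ idempotent'') is illegitimate, because $A$ is only assumed piecewise syndetic, so the witness $p\in\overline{A}\cap K\left(\delta S\right)$ need not be an idempotent and $A^{*}$ is not even defined; and your claimed stronger conclusion $T*x\subseteq A$ is false in general --- the lemma's conclusion is stated with $\bigcup_{t\in H}t^{-1}A$ precisely because one cannot do better.

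The missing ingredient is the paper's Lemma on inverse images of products (cited as \cite{key-5}, Lemma $2.3$): if $b\in\varphi\left(a\right)$ then $b^{-1}\left(a^{-1}A\right)=\left(a*b\right)^{-1}A$. With it the argument closes in one step: for each $y\in T$ pick $s_{y}\in H$ with $s_{y}*y\in D$, i.e.\ $\left(s_{y}*y\right)^{-1}A\in p$, hence $y^{-1}\left(s_{y}^{-1}A\right)\in p$. Since $p\in\delta S$ we also have $\sigma\left(T\right)\in p$, so
\[
E=\sigma\left(T\right)\cap\bigcap_{y\in T}y^{-1}\left(s_{y}^{-1}A\right)\in p,
\]
and any $x\in E$ satisfies $x\in\sigma\left(T\right)$ and $y*x\in s_{y}^{-1}A\subseteq\bigcup_{t\in H}t^{-1}A$ for every $y\in T$, which is exactly the assertion. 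So the correct target of the finite intersection is $\bigcap_{y\in T}y^{-1}\left(s_{y}^{-1}A\right)$, not $\bigcap_{y\in T}y^{-1}A$; no induction on $\left|T\right|$, no recursive choice of $H$, and no idempotent are needed.
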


\begin{proof}
\cite{key-5}, Lemma $2.17$.
\end{proof}
The following definition presents one of the essential notions in
this context: the adequate sequence for a partialÂ semigroup.
\begin{defn}
Let $\left(S,*\right)$ be an adequate partial semigroup and let $f$
be a sequence in $S$. Then $f$ is adequate if and only if
\begin{enumerate}
\item for each $H\in\mathcal{P}_{f}\left(\mathbb{N}\right)$, $\prod_{t\in H}f\left(t\right)$
is defined and
\item for each $F\in\mathcal{P}_{f}\left(S\right)$, there exists $m\in\mathbb{N}$
such that 
\[
FP\left(\left(f\left(t\right)\right)_{t=m}^{\infty}\right)\subseteq\sigma\left(F\right).
\]
\end{enumerate}
\end{defn}

\begin{defn}
Let $\left(S,*\right)$ be an adequate partial semigroup. Then 
\[
\mathcal{F}=\left\{ f:f\text{ is an adequate sequence in }S\right\} .
\]
\end{defn}

\section{FURTHER GENERALIZATION OF PHULARA'S CENTRAL SETS THEOREM FOR ADEQUATE
PARTIAL SEMIGROUPS}

In this paper, we prove that Theorem \ref{Pu} remains valid when
further generalized to arbitrary adequate partial semigroups. To establish
this result, we first require the following lemma.
\begin{lem}
\label{Lem2} Let $\left(S,*\right)$ be an adequate partial semigroup
and let $A$ be a $J$-set in $S$. Let $r\in\mathbb{N}$, let $F\in\mathcal{P}_{f}\left(\mathcal{F}\right)$,
and let $L\in\mathcal{P}_{f}\left(S\right)$. There exist $m\in\mathbb{N}$,
$a\in S^{m+1}$, and $t\in\mathcal{J}_{m}$ such that $t\left(1\right)>r$
and for all $f\in F$,
\end{lem}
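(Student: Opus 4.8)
The statement of Lemma \ref{Lem2} is clearly meant to read: there exist $m\in\mathbb{N}$, $a\in S^{m+1}$, and $t\in\mathcal{J}_m$ such that $t(1)>r$ and for all $f\in F$,
\[
\Bigl(\prod_{i=1}^{m}a(i)*f(t(i))\Bigr)*a(m+1)\in A\cap\sigma(L).
\]
So this is just the $J$-set defining property, but with the extra requirement that the first index $t(1)$ exceed a prescribed bound $r$. The plan is to reduce this to the ordinary $J$-set property by a shift argument: replace each $f\in F$ by its shifted sequence and apply the definition of $J$-set to the shifted family, then translate the resulting data back.

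\begin{proof}
For each $f\in F$ define $g_f\in{}^{\mathbb N}S$ by $g_f(n)=f(n+r)$ for all $n\in\mathbb N$; since $f$ is an adequate sequence and a tail of an adequate sequence is adequate, each $g_f\in\mathcal F$. Put $F'=\{g_f:f\in F\}\in\mathcal P_f(\mathcal F)$. Since $A$ is a $J$-set, applying the definition of $J$-set to $F'$ and $L$ yields $m\in\mathbb N$, $a\in S^{m+1}$, and $s\in\mathcal J_m$ such that for every $f\in F$,
\[
\Bigl(\prod_{i=1}^{m}a(i)*g_f(s(i))\Bigr)*a(m+1)\in A\cap\sigma(L).
\]
Now set $t(i)=s(i)+r$ for $i\in\{1,2,\dots,m\}$. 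Since $s(1)<s(2)<\dots<s(m)$ we get $t(1)<t(2)<\dots<t(m)$, so $t\in\mathcal J_m$, and $t(1)=s(1)+r>r$. Moreover $g_f(s(i))=f(s(i)+r)=f(t(i))$ for each $i$, so the displayed membership becomes
\[
\Bigl(\prod_{i=1}^{m}a(i)*f(t(i))\Bigr)*a(m+1)\in A\cap\sigma(L),
\]
which is exactly what is required, with the same $m$ and $a$.
\end{proof}

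The only point that deserves care is the assertion that a tail of an adequate sequence is again adequate: condition (1) in the definition of adequate sequence is immediate since $\prod_{t\in H}g_f(t)=\prod_{t\in H+r}f(t)$ is defined, and condition (2) is immediate because $FP((g_f(t))_{t=k}^{\infty})=FP((f(t))_{t=k+r}^{\infty})$, so any $m$ witnessing (2) for $f$ gives $m-r$ (or $1$ if $m\le r$) for $g_f$. This is the main ``obstacle,'' and it is routine; everything else is just bookkeeping with the index shift. One should also note that in the degenerate case $m$ could a priori be forced large enough that $s(1)$ already exceeds $r$, but the shift argument handles all cases uniformly, so there is no need to split into cases.
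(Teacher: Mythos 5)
Your proof is correct: the tail-shift $g_f(n)=f(n+r)$ stays in $\mathcal{F}$, the $J$-set property applied to $\{g_f:f\in F\}$ and $L$ gives the data, and re-indexing by $t(i)=s(i)+r$ forces $t(1)>r$ while preserving the products. This is essentially the same argument as in the source the paper cites for this lemma (Hindman--Pleasant, Lemma 3.5), the paper itself giving no proof beyond that citation.
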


$\prod_{i=1}^{m}a\left(i\right)*f\left(t\left(i\right)\right)*a\left(m+1\right)\in A\cap\sigma\left(L\right)$.
\begin{proof}
\cite{key-6}, Lemma $3.5$.
\end{proof}
We are now ready to present the main theorem of this section:
\begin{thm}
Let $\left(S,*\right)$ be an adequate partial semigroup and let $r$
be an idempotent in $J\left(S\right)$, and $R:\mathcal{P}_{f}(\mathcal{F})\to r$
is a function. Then there exist functions $m^{*}:\mathcal{P}_{f}\left(\mathcal{F}\right)\rightarrow\mathbb{N}$,
$\alpha\in\times_{F\in\mathcal{P}_{f}\left(\mathcal{F}\right)}S^{m^{*}\left(F\right)+1}$
and $\tau\in\times_{F\in\mathcal{P}_{f}\left(\mathcal{F}\right)}\mathcal{J}_{m^{*}\left(F\right)}$
such that 

(1) if $F,G\in\mathcal{P}_{f}(\mathcal{F})$ and $G\subsetneq F$,
then $\mathcal{\tau}(G)(m^{*}(G))<\mathcal{\tau}(F)(1)$ and 

(2) if $s\in\mathbb{N}$, $G_{1},G_{2},\ldots,G_{s}\in\mathcal{P}_{f}\left(\mathcal{F}\right)$,
$G_{1}\subsetneq G_{2}\subsetneq\ldots\subsetneq G_{s}$, and for
each $i\in\{1,2,\ldots,s\}$, $f_{i}\in G_{i}$, then

\[
\prod_{i=1}^{s}\left(\left(\prod_{j=1}^{m^{*}(G_{i})}\alpha(G_{i})(j)*f_{i}(\mathcal{\tau}(G_{i})(j)\right)*\alpha(G_{i})(m^{*}(G_{i})+1)\right)\in R\left(G_{1}\right).
\]
\end{thm}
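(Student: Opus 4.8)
The plan is to run the now-standard ``central sets theorem'' induction, but carrying the function $R$ along so that every partial product lands in the \emph{prescribed} member $R(G_1)$ rather than in a single fixed central set. Fix the idempotent $r \in J(S)$. For each $C \in r$ recall that $C^{\star} = \{x \in C : x^{-1}C \in r\} \in r$ by Lemma~\ref{Lem1}, and that every member of $r$ is a $J$-set by definition of $J(S)$. The construction will be by induction on $|F|$ for $F \in \mathcal{P}_f(\mathcal{F})$; simultaneously, for each such $F$ we will define $m^{*}(F)$, $\alpha(F) \in S^{m^{*}(F)+1}$, $\tau(F) \in \mathcal{J}_{m^{*}(F)}$, and (as auxiliary bookkeeping) a member $B(F) \in r$ which records the ``remaining target'' that all future products must stay inside. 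The defining requirement is: whenever $G_1 \subsetneq \dots \subsetneq G_s = F$ with $f_i \in G_i$, the product $\prod_{i=1}^{s}\bigl(\prod_{j=1}^{m^{*}(G_i)}\alpha(G_i)(j)*f_i(\tau(G_i)(j))\bigr)*\alpha(G_i)(m^{*}(G_i)+1)$ lies in $R(G_1)^{\star}$ and, moreover, its inverse image applied to $R(G_1)^{\star}$ still lies in $r$ (so the induction can continue). To make this uniform over the finitely many possible chains ending at $F$, $B(F)$ will be defined as the intersection, over all chains $G_1 \subsetneq \dots \subsetneq G_s = F$ and all choices $f_i \in G_i$, of the sets $\bigl(\text{that product}\bigr)^{-1}R(G_1)^{\star}$; this is a finite intersection of members of $r$, hence in $r$, hence a $J$-set.

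The base case is $|F| = 1$, say $F = \{f\}$. Here the only chain is $G_1 = F$, so we need a product (for $i=1$) of the form $\prod_{j=1}^{m}\alpha(F)(j)*f(\tau(F)(j))*\alpha(F)(m+1) \in R(F)^{\star}$. Apply Lemma~\ref{Lem2} with the $J$-set $A = R(F)^{\star}$, with $r$ there equal to $0$ (no lower bound needed yet), with the finite set $F \in \mathcal{P}_f(\mathcal{F})$ itself playing the role of ``$F$'' in that lemma, and with $L = \emptyset$; this yields $m^{*}(F)$, $\alpha(F)$, $\tau(F)$ with the product in $R(F)^{\star}$. Then set $B(F)$ to be the intersection over $f \in F$ of the inverse images $\bigl(\prod_{j}\alpha(F)(j)*f(\tau(F)(j))*\alpha(F)(m^{*}(F)+1)\bigr)^{-1}R(F)^{\star}$, which is in $r$ by Lemma~\ref{Lem1}. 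For the inductive step, suppose $|F| = k+1$ and everything has been defined for smaller sets. Let
\[
r_0 = \max\{\tau(G)(m^{*}(G)) : \emptyset \neq G \subsetneq F\},
\]
a finite max, so we can enforce condition~(1). Let $A$ be the intersection, over all proper nonempty $G \subsetneq F$, of the sets $B(G)$; this is a finite intersection of members of $r$, hence a $J$-set. Now apply Lemma~\ref{Lem2} to the $J$-set $A$, with the integer $r_0$, with the finite family $F \in \mathcal{P}_f(\mathcal{F})$, and with $L = \emptyset$, to get $m^{*}(F)$, $\alpha(F)$, $\tau(F)$ with $\tau(F)(1) > r_0$ and, for every $f \in F$,
\[
w(F,f) := \Bigl(\prod_{j=1}^{m^{*}(F)}\alpha(F)(j)*f(\tau(F)(j))\Bigr)*\alpha(F)(m^{*}(F)+1) \in A.
\]
Condition~(1) holds by the choice of $r_0$. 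Finally define $B(F)$ as the finite intersection, over all chains $G_1 \subsetneq \dots \subsetneq G_s \subsetneq F$, all $f_i \in G_i$, and all $f \in F$, of the sets $x^{-1}R(G_1)^{\star}$ where $x$ is the full product along the chain $G_1 \subsetneq \dots \subsetneq G_s \subsetneq F$ with last block $w(F,f)$ (and also, for the length-one chain $G_1 = F$, the sets $w(F,f)^{-1}R(F)^{\star}$); again this lies in $r$ by repeated use of Lemma~\ref{Lem1}, since each such $x$ lies in the relevant $R(G_1)^{\star}$.

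It remains to verify conclusion~(2). Given $G_1 \subsetneq \dots \subsetneq G_s$ and $f_i \in G_i$, write $x_i = w(G_i, f_i)$; the claimed product is $x_1 * x_2 * \dots * x_s$. I prove by downward induction on $i$ that $x_i * x_{i+1} * \dots * x_s \in R(G_1)^{\star}$, or more precisely, that $x_{i+1} * \dots * x_s \in (x_1 * \dots * x_i)^{-1} R(G_1)^{\star}$ with the latter in $r$. For the base point $i = s$: by construction $x_s = w(G_s, f_s) \in A \subseteq B(G_{s-1}) \subseteq \dots$, and unwinding $A$'s definition at stage $G_s$ shows $x_s$ lies in $(x_1 * \dots * x_{s-1})^{-1} R(G_1)^{\star}$ (this is exactly one of the intersectands building $A = \bigcap_{G \subsetneq G_s} B(G)$, traced back through $B(G_{s-1})$). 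The inductive step from $i+1$ to $i$ uses the partial-semigroup inverse-image identity of the unnumbered Lemma (``Let $(S,*)$ be a partial semigroup\dots $b^{-1}(a^{-1}A) = (a*b)^{-1}A$''), converting membership of the tail in an iterated inverse image into membership of $x_i$ times the tail in one fewer inverse image; the hypothesis $x_i \in B(G_{i-1})^{\star}$-type membership (from the stage-$G_i$ construction) supplies precisely the needed inclusion. Collapsing at $i = 1$ gives $x_1 * \dots * x_s \in R(G_1)^{\star} \subseteq R(G_1)$, as desired.

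\textbf{Main obstacle.} The genuine difficulty is purely bookkeeping: one must set up $B(F)$ so that it simultaneously encodes the correct ``tail target'' for \emph{every} chain terminating at $F$ and every choice of the $f_i$'s, and then in the verification one must track, through the partial-operation inverse-image identity and the fact that $a \in C^{\star} \Rightarrow a^{-1}(C^{\star}) \in r$, that the iterated inverse images telescope correctly back to $R(G_1)^{\star}$. Two points need care that are absent in the classical commutative case: first, all products must respect the partial operation, so one must confirm at each application of Lemma~\ref{Lem2} that the elements produced actually lie in the relevant $\varphi(\cdot)$ domains --- this is handled automatically because Lemma~\ref{Lem2} (equivalently Lemma~3.5 of \cite{key-6}) already delivers elements in $A \cap \sigma(L)$, and by enlarging $L$ appropriately at each stage to include all previously-constructed $\alpha(G)(j)$ and the required finite pieces, every product along a chain is defined; second, because $\delta S$ need not be commutative, the order of multiplication along the chain matters, but since the chains are strictly increasing and $\tau(G_i)(m^{*}(G_i)) < \tau(G_{i+1})(1)$ by condition~(1), the left-to-right product order is forced and consistent, so no commutativity is needed. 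With $L$ chosen at stage $F$ to be $\{\alpha(G)(j) : G \subsetneq F,\ 1 \le j \le m^{*}(G)+1\}$ together with all relevant partial products already built, adequacy of $S$ guarantees $\sigma(L) \neq \emptyset$ and the construction goes through.
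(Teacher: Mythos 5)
Your overall strategy is the same as the paper's: induct on $\vert F\vert$, work with the starred sets $C^{*}=\left\{ x\in C:x^{-1}C\in r\right\}$, and at each stage apply Lemma \ref{Lem2} to a suitable member of $r$ (automatically a $J$-set) with a lower bound forcing condition (1). The only real difference is bookkeeping: the paper forms the target set at stage $F$ directly as $R\left(F\right)^{*}\cap\bigcap_{x}x^{-1}R\left(G_{1}\right)^{*}$, the intersection running over the finitely many chain products $x$ ending strictly below $F$, whereas you pre-package those inverse images into auxiliary sets $B\left(G\right)$ at earlier stages and intersect them. That variant can be made to work, but as written it has a genuine gap: at the inductive step your set $A=\bigcap_{\emptyset\neq G\subsetneq F}B\left(G\right)$ does not contain $R\left(F\right)^{*}$ (you included it only in the base case). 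Hence nothing forces $w\left(F,f\right)\in R\left(F\right)^{*}$, so conclusion (2) already fails for the one-term chain $G_{1}=F$ when $\vert F\vert>1$; moreover your $B\left(F\right)$ contains the sets $w\left(F,f\right)^{-1}R\left(F\right)^{*}$, whose membership in $r$ is given by Lemma \ref{Lem1} only when $w\left(F,f\right)\in R\left(F\right)^{*}$, which you have not secured, so the claim $B\left(F\right)\in r$ (needed so that later $A$'s are members of $r$, hence $J$-sets) collapses as well. The fix is exactly the paper's move: intersect $A$ with $R\left(F\right)^{*}$ at every stage.

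A second point needs repair: definedness of the products along a chain is not delivered by your construction proper. You invoke Lemma \ref{Lem2} with $L=\emptyset$ (note also that $\mathcal{P}_{f}\left(S\right)$ consists of nonempty sets), so the new block $w\left(F,f\right)$ is not guaranteed to lie in $\sigma$ of anything, and $x*w\left(F,f\right)$ may simply be undefined for a previously built chain product $x$. The paper takes $L$ to be precisely the set of all chain products formed at earlier stages, so the new block lies in $A\cap\sigma\left(L\right)$ and $x*y$ is defined whenever $x\in L$. Your closing remark about enlarging $L$ points the right way, but the set you name, the coordinates $\alpha\left(G\right)\left(j\right)$ ``together with all relevant partial products'', must be taken to be exactly the set of full chain products (the paper's sets $M_{m}$); putting the individual coordinates $\alpha\left(G\right)\left(j\right)$ into $L$ accomplishes nothing. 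Finally, the downward induction in your verification is unnecessary and somewhat muddled: once $B\left(G_{s-1}\right)\subseteq\left(x_{1}*\cdots*x_{s-1}\right)^{-1}R\left(G_{1}\right)^{*}$, a single application gives $x_{1}*\cdots*x_{s}\in R\left(G_{1}\right)^{*}$, which is exactly how the paper argues; you should also state explicitly that conclusion (2) for chains ending at $F$ is verified before $B\left(F\right)$ is defined, since $B\left(F\right)\in r$ depends on it.
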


\begin{proof}
Let $C^{*}=\left\{ x\in C:x^{-1}C\in r\right\} $. Then $C^{*}\in r$
and by Lemma \ref{Lem1}, for each $x\in C^{*}$, $x^{-1}C^{*}\in r$. 

We define $m^{*}\left(F\right)\in\mathbb{N}$, $\alpha\left(F\right)\in S^{m^{*}\left(F\right)+1}$,
$\mathcal{\tau}(F)\in\mathcal{J}_{m^{*}\left(F\right)}$ for $F\in\mathcal{P}_{f}\left(\mathcal{F}\right)$
by induction on $\vert F\vert$ satisfying the following induction
hypothesis:

(1) if $\emptyset\neq G\subsetneq F$, then $\mathcal{\tau}(G)(m^{*}(G))<\mathcal{\tau}(F)(1)$
and

(2) if $s\in\mathbb{N}$, $G_{1},G_{2},\ldots,G_{s}\in\mathcal{P}_{f}\left(\mathcal{F}\right)$,
$G_{1}\subsetneq G_{2}\subsetneq\ldots\subsetneq G_{s}=F$, and for
each $i\in\left\{ 1,2,\ldots,s\right\} $, $f_{i}\in G_{i}$, then

\[
\prod_{i=1}^{s}\left(\left(\prod_{j=1}^{m^{*}(G_{i})}\alpha(G_{i})(j)*f_{i}(\mathcal{\tau}(G_{i})(j)\right)*\alpha(G_{i})(m^{*}(G_{i})+1)\right)\in R\left(G_{1}\right)^{*}.
\]

Assume first that $F=\left\{ f\right\} $. Since $R\left(F\right)\in r,$
we have $R\left(F\right)^{*}\in r,$ $R\left(F\right)^{*}$ is a $J$-set,
pick $k\in\mathbb{N}$, $a\in S^{k+1}$, and $t\in\mathcal{J}_{k}$
such that $\prod_{j=1}^{k}a\left(j\right)*f\left(t\left(j\right)\right)*a\left(k+1\right)\in R\left(F\right)^{*}$.
Let $m^{*}=k$, $\alpha\left(F\right)=a$, and $\tau(F)=t$. Hypothesis
(1) holds vacuously. To verify hypothesis (2), let $s$$\in\mathbb{N}$
and assume that $G_{1},G_{2},\ldots,G_{s}\in\mathcal{P}_{f}\left(\mathcal{F}\right)$,
$G_{1}\subsetneq G_{2}\subsetneq\ldots\subsetneq G_{s}=F$, and for
each $i\in\left\{ 1,2,\ldots,s\right\} $ , $f_{i}\in G_{i}$. Then
$s=1$, $G_{1}=F=\left\{ f\right\} $, and $f_{1}=f$. So

\[
\begin{array}{c}
\prod_{i=1}^{s}\left(\left(\prod_{j=1}^{m^{*}(G_{i})}\alpha(G_{i})(j)*f_{i}(\mathcal{\tau}(G_{i})(j)\right)*\alpha(G_{i})(m^{*}(G_{i})+1)\right)=\\
\prod_{j=1}^{k}a\left(j\right)*f\left(t\left(j\right)\right)*a\left(k+1\right)\in R\left(F\right)^{*}
\end{array}.
\]

Now let $n>1$, let $F\in\mathcal{P}_{f}\left(\mathcal{F}\right)$
with $\vert F\vert=n$, and assume that for each proper (nonempty)
subset $G$ of $F$, $m^{*}\left(G\right)$, $\alpha\left(F\right)$,
and $\tau\left(G\right)$ have been defined satisfying hypothesis
(1) and (2). Let $K=\left\{ \tau\left(G\right)\left(m^{*}\left(G\right)+1\right):\emptyset\neq G\subsetneq F\right\} $
and let $d=\max K$.

For $m\in\left\{ 1,2,\ldots,n-1\right\} $, let 
\[
M_{m}=\left\{ \begin{array}{c}
\prod_{i=1}^{s}\left(\left(\prod_{j=1}^{m^{*}(G_{i})}\alpha(G_{i})(j)*f_{i}(\mathcal{\tau}(G_{i})(j)\right)*\alpha(G_{i})(m^{*}(G_{i})+1)\right):\\
s\in\mathbb{N},G_{1}\subsetneq G_{2}\subsetneq\ldots\subsetneq G_{s}\subsetneq F,\\
\text{ and for each }i\in\left\{ 1,2,\ldots,s\right\} ,f_{i}\in G_{i}
\end{array}\right\} .
\]
Then by hypothesis (2), $M_{m}\subseteq R\left(G\right)^{*}$. 

Let $A=R\left(F\right)^{*}\cap\bigcap_{m=1}^{n-1}\bigcap_{x\in M_{m}}x^{-1}R\left(G\right)^{*}$.
Then $A\in r$. Let $L=\cup_{m=1}^{n-1}M_{m}$. By Lemma \ref{Lem2},
pick $k\in\mathbb{N}$, $a\in S^{k+1}$, and $t\in\mathcal{J}_{k}$
such that $t\left(1\right)>d$ and for all $f\in F$, $\left(\prod_{j=1}^{k}a\left(j\right)*f\left(t\left(j\right)\right)\right)*a\left(k+1\right)\in A\cap\sigma\left(L\right)$.
Let $m^{*}\left(F\right)=k$, $\alpha\left(F\right)=a$, and $\tau\left(F\right)=t$.

To verify hypothesis (1), assume $\emptyset\neq G\subsetneq F$. Then
$\tau\left(G\right)\left(m^{*}\left(G\right)+1\right)\leq d<\tau\left(F\right)\left(1\right)$.
To verify hypothesis (2), let $s,\in\mathbb{N}$ and assume that $G_{1},G_{2},\ldots,G_{s}\in\mathcal{P}_{f}\left(\mathcal{F}\right)$,
$G_{1}\subsetneq G_{2}\subsetneq\ldots\subsetneq G_{s}=F$, and for
each $i\in\left\{ 1,2,\ldots,s\right\} $, $f_{i}\in G_{i}$.

Assume first that $s=1$. Then

\[
\begin{array}{c}
\prod_{i=1}^{s}\left(\left(\prod_{j=1}^{m^{*}(G_{i})}\alpha(G_{i})(j)*f_{i}\left(\tau\left(G_{i}\right)\right)\right)*\alpha(G_{i})(m^{*}(G_{i})+1)\right)=\\
\left(\prod_{j=1}^{k}a\left(j\right)*f_{1}\left(t\left(j\right)\right)\right)*a\left(k+1\right)\in A\subseteq R\left(F\right)^{*}\text{ and }F=G_{1}.
\end{array}
\]

Now assume that $s>1$. Let

$x=\prod_{i=1}^{s-1}\left(\left(\prod_{j=1}^{m^{*}(G_{i})}\alpha(G_{i})(j)*f_{i}\left(\tau\left(G_{i}\right)\right)\right)*\alpha(G_{i})(m^{*}(G_{i})+1)\right)$
and let 

$y=\left(\left(\prod_{j=1}^{m^{*}(G_{s})}\alpha(G_{s})(j)*f_{s}\left(\tau\left(G_{s}\right)\right)\right)*\alpha(G_{s})(m^{*}(G_{s})+1)\right)$.
Then $x\in M_{m}$ and 

$y=\left(\prod_{j=1}^{k}a\left(j\right)*f_{s}\left(t\left(j\right)\right)\right)*a\left(k+1\right)\in A\cap\sigma\left(L\right)\subseteq x^{-1}R\left(G_{1}\right)^{*}$.
Since $x\in L$ and $y\in\sigma\left(L\right)$, $x*y$ is defined
and 

$\prod_{i=1}^{s}\left(\left(\prod_{j=1}^{m^{*}(G_{i})}\alpha(G_{i})(j)*f_{i}\left(\tau\left(G_{i}\right)\right)\right)*\alpha(G_{i})(m^{*}(G_{i})+1)\right)=x*y\in R\left(G_{1}\right)^{*}\subseteq R\left(G_{1}\right)$.

The inductive construction is complete. Conclusions (1) and (2) of
the theorem follow from hypothesis (1) and (2) for $G_{s}=F$. Thus,
the proof is complete.
\end{proof}
\begin{cor}
Let $\left(S,*\right)$ be a commutative adequate partial semigroup
and let $r$ be an idempotent in $J\left(S\right)$ and $R:\mathcal{P}_{f}(\mathcal{F})\to r$
is a function. Then there exist functions $\gamma:\mathcal{P}_{f}\left(\mathcal{F}\right)\to S$
and $H:\mathcal{P}_{f}\left(\mathcal{F}\right)\to\mathcal{P}_{f}\left(\mathbb{N}\right)$
such that
\begin{enumerate}
\item if $F,G\in\mathcal{P}_{f}\left(\mathcal{F}\right)$ and $G\subsetneq F$,
then $\max H\left(G\right)<\min H\left(F\right)$ and
\item if $n\in\mathbb{N}$, $G_{1},G_{2},\ldots,G_{n}\in\mathcal{P}_{f}\left(\mathcal{F}\right)$,
$G_{1}\subsetneq G_{2}\subsetneq\ldots\subsetneq G_{n}$, and for
each $i\in\left\{ 1,2,\ldots,n\right\} $, $f_{i}\in G_{i}$, then
$\prod_{i=1}^{n}\left(\gamma\left(G_{i}\right)*\prod_{t\in H\left(G_{i}\right)}f_{i}\left(t\right)\right)\in R\left(G_{1}\right)$.
\end{enumerate}
\end{cor}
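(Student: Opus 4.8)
The plan is to obtain the corollary as a ``normal form'' restatement of the preceding theorem, using commutativity to collapse each block
\[
\Bigl(\prod_{j=1}^{m^{*}(F)}\alpha(F)(j)*f(\tau(F)(j))\Bigr)*\alpha(F)(m^{*}(F)+1)
\]
into the shape $\gamma(F)*\prod_{t\in H(F)}f(t)$. Concretely, apply the theorem to the given idempotent $r\in J(S)$ and the function $R$, obtaining $m^{*}:\mathcal{P}_{f}(\mathcal{F})\to\mathbb{N}$, $\alpha\in\times_{F\in\mathcal{P}_{f}(\mathcal{F})}S^{m^{*}(F)+1}$ and $\tau\in\times_{F\in\mathcal{P}_{f}(\mathcal{F})}\mathcal{J}_{m^{*}(F)}$. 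Then set, for each $F\in\mathcal{P}_{f}(\mathcal{F})$,
\[
\gamma(F)=\prod_{j=1}^{m^{*}(F)+1}\alpha(F)(j)\qquad\text{and}\qquad H(F)=\bigl\{\tau(F)(1),\tau(F)(2),\ldots,\tau(F)(m^{*}(F))\bigr\}.
\]
Since $\tau(F)\in\mathcal{J}_{m^{*}(F)}$ is strictly increasing, $H(F)\in\mathcal{P}_{f}(\mathbb{N})$ with $\min H(F)=\tau(F)(1)$ and $\max H(F)=\tau(F)(m^{*}(F))$, and listing $H(F)$ in increasing order recovers exactly $\tau(F)(1)<\cdots<\tau(F)(m^{*}(F))$.

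For conclusion (1): if $G\subsetneq F$, then conclusion (1) of the theorem gives $\tau(G)(m^{*}(G))<\tau(F)(1)$, i.e. $\max H(G)<\min H(F)$. For conclusion (2): fix $n\in\mathbb{N}$, $G_{1}\subsetneq\cdots\subsetneq G_{n}$ in $\mathcal{P}_{f}(\mathcal{F})$, and $f_{i}\in G_{i}$; conclusion (2) of the theorem places the product
\[
\prod_{i=1}^{n}\Bigl(\bigl(\prod_{j=1}^{m^{*}(G_{i})}\alpha(G_{i})(j)*f_{i}(\tau(G_{i})(j))\bigr)*\alpha(G_{i})(m^{*}(G_{i})+1)\Bigr)
\]
in $R(G_{1})$. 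I would then invoke the standard fact that in a commutative partial semigroup, whenever a product of finitely many elements is defined under one parenthesization and ordering it is defined, and equal, under every parenthesization and ordering (this follows by a routine induction from the associativity axiom together with commutativity of adjacent pairs). Applying this within the $i$-th block, the $\alpha$-terms may be gathered to the front and the $f_{i}$-terms kept in increasing order of their arguments, so the $i$-th block equals $\gamma(G_{i})*\prod_{t\in H(G_{i})}f_{i}(t)$, where $\prod_{t\in H(G_{i})}f_{i}(t)=\prod_{j=1}^{m^{*}(G_{i})}f_{i}(\tau(G_{i})(j))$ (defined, since $f_{i}$ is adequate). Substituting block by block gives $\prod_{i=1}^{n}\bigl(\gamma(G_{i})*\prod_{t\in H(G_{i})}f_{i}(t)\bigr)\in R(G_{1})$, which is conclusion (2).

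The only point needing care — and the one I would flag as the main obstacle — is the bookkeeping for the partial operation: rearranging the product is legitimate precisely because the theorem already guarantees that all the relevant subproducts are defined, and commutativity then propagates definedness through reassociation and permutation. No further appeal to the algebra of $\delta S$ or to adequacy beyond what the theorem already uses is required; everything else is substitution.
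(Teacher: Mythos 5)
Your proposal is correct and matches the paper's proof exactly: the paper likewise applies the preceding theorem and sets $\gamma(F)=\prod_{j=1}^{m^{*}(F)+1}\alpha(F)(j)$ and $H(F)=\{\tau(F)(j):j\in\{1,\ldots,m^{*}(F)\}\}$, leaving the commutative rearrangement implicit. Your additional care about definedness under reassociation is a sound elaboration of the same argument, not a different route.
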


\begin{proof}
Let $m^{*}$, $\alpha$ and $\tau$ be as guaranteed by previous theorem.
For $F\in\mathcal{P}_{f}\left(\mathcal{F}\right)$, let $\gamma\left(F\right)=\prod_{j=1}^{m^{*}\left(F\right)+1}\alpha\left(F\right)\left(j\right)$
and let $H\left(F\right)=\left\{ \tau\left(F\right)\left(j\right):j\in\left\{ 1,2,\ldots,m^{*}\left(F\right)\right\} \right\} $.
\end{proof}

\section{FURTHER GENERALIZATION OF PHULARA'S VERSION OF THE CENTRAL SETS THEOREM
FOR VIP SYSTEMS IN PARTIAL SEMIGROUPS}

In this section, we focus on a special class of finite families of
VIP systems and aim to further generalize the Central Sets Theorem.
The definitions of VIP systems for commutative semigroups and adequate
partial semigroups have already been provided in the Introduction
(Definitions \ref{def1.4} and \ref{def1.5}).
\begin{defn}
\label{Def4.1} Let $\left(S,+\right)$ be commutative adequate partial
semigroup. A finite set $\left\{ \langle v_{\alpha}^{\left(i\right)}\rangle_{\alpha\in\mathcal{P}_{f}\left(\mathbb{N}\right)}:1\leq i\leq k\right\} $
of VIP systems is said to be \textit{adequate} if there exist $d,\text{ }t\in\mathbb{N}$,
a set $\left\{ \langle m_{\gamma}\rangle_{\gamma\in\mathcal{F}_{d}}:i\in\left\{ 1,2,\dots,k\right\} \right\} $,
a set of VIP systems 
\[
\left\{ \langle u_{\alpha}^{\left(i\right)}=\sum_{\gamma\subseteq\alpha,\gamma\in\mathcal{F}_{d}}n_{\gamma}^{\left(i\right)}\rangle_{\alpha\in\mathcal{P}_{f}\left(\mathbb{N}\right)}:i\in\left\{ 1,2,\ldots,t\right\} \right\} ,
\]
 and sets $E_{1},E_{2},\ldots,E_{k}\subseteq\left\{ 1,2,\ldots,t\right\} $
such that
\begin{enumerate}
\item For each $i\in\left\{ 1,2,\ldots,k\right\} $, $\langle m_{\gamma}\rangle_{\gamma\in\mathcal{F}_{d}}$
generates $\langle v_{\alpha}^{\left(i\right)}\rangle_{\alpha\in\mathcal{F}}$.
\item For every $H\in\mathcal{P}_{f}\left(S\right)$, there exists $m\in\mathbb{N}$
such that for every $l\in\mathbb{N}$ and pairwise distinct $\gamma_{1},\gamma_{2},\ldots,\gamma_{l}\in\mathcal{F}_{d}$
with each 
\[
\gamma_{i}\nsubseteq\left\{ 1,2,\ldots,m\right\} ,\text{ }\sum_{i=1}^{t}\sum_{j=1}^{l}n_{\gamma_{j}}^{\left(i\right)}\in\sigma\left(H\right)\cup\left\{ 0\right\} .
\]
( In particular, the sum is defined.)
\item $\text{\ensuremath{m_{\gamma}^{\left(i\right)}}=\ensuremath{\sum_{t\in E_{i}}n_{\gamma}^{\left(t\right)}}}$
for all $i\in\left\{ 1,2,\ldots,k\right\} $ and all $\gamma\in\mathcal{F}_{d}$.
\end{enumerate}
\end{defn}

\begin{defn}
Let $\left(S,+\right)$ be a commutative adequate partial semigroup
and let $\mathcal{A}\subseteq\mathcal{P}_{f}\left(S\right)$. $\mathcal{A}$
is said to be adequately partition regular family if for every finite
subset $H$ of $S$ and every $r\in\mathbb{N}$, there exists a finite
set $F\subseteq\sigma\left(H\right)$ having the property that if
$F=\cup_{i=1}^{r}C_{i}$ then for some $j\in\left\{ 1,2,\ldots,r\right\} $,
$C_{j}$ contains a member of $\mathcal{A}$. $\mathcal{A}$ is said
to be \textit{shift invariant} if for all $A\in\mathcal{A}$ and all
$x\in\sigma\left(A\right)$, $A+x=\left\{ a+x:a\in A\right\} \in\mathcal{A}$.
\end{defn}

Let us now mention some useful theorems from \cite{key-5} for the
proof of our main theorem.
\begin{thm}
\label{Theo4.3} Let $\left(S,+\right)$ be a commutative adequate
partial semigroup and let $k\in\mathbb{N}$. If $\left\{ \langle v_{\alpha}^{\left(i\right)}\rangle_{\alpha\in\mathcal{P}_{f}\left(\mathbb{N}\right)}:1\leq i\leq k\right\} $
is an adequate set of VIP systems in $S$, and $\beta\in\mathcal{P}_{f}\left(\mathbb{N}\right)$,
then the family 
\[
\mathcal{A}=\left\{ \begin{array}{c}
\left\{ a,a+v_{\alpha}^{\left(1\right)},a+v_{\alpha}^{\left(2\right)},\ldots,a+v_{\alpha}^{\left(k\right)}\right\} :\\
\alpha\in\mathcal{P}_{f}\left(\mathbb{N}\right),a\in\sigma\left(\left\{ v_{\alpha}^{\left(1\right)},v_{\alpha}^{\left(2\right)},\ldots,v_{\alpha}^{\left(k\right)}\right\} \right)\text{ and}\text{ }\alpha>\beta
\end{array}\right\} 
\]
 is adequately partition regular.
\end{thm}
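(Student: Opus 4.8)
The statement is a finitary, partition-regular form of the polynomial van der Waerden theorem for VIP systems, and the plan is to prove it by an exhaustion (PET) induction on the degree $d$ of the adequate set of VIP systems, after first reducing it to a statement about a single colouring. Fix $H\in\mathcal{P}_{f}(S)$ and $r\in\mathbb{N}$. By a standard K\"onig/Tychonoff compactness argument it is enough to show that for every $\chi:\sigma(H)\to\{1,2,\ldots,r\}$ there exist $\alpha\in\mathcal{P}_{f}(\mathbb{N})$ with $\alpha>\beta$ and $a\in\sigma(\{v_{\alpha}^{(1)},\ldots,v_{\alpha}^{(k)}\})$ such that $\{a,a+v_{\alpha}^{(1)},\ldots,a+v_{\alpha}^{(k)}\}\subseteq\sigma(H)$ and this set is $\chi$-monochromatic: if no finite $F\subseteq\sigma(H)$ as in the definition existed, coherent ``bad'' colourings of larger and larger finite subsets of $\sigma(H)$ would amalgamate to a bad colouring of $\sigma(H)$ itself, a contradiction.

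Next I would pin down the partial operation. Let $d$, $\langle n_{\gamma}^{(i)}\rangle_{\gamma\in\mathcal{F}_{d}}$, the auxiliary systems $\langle u_{\alpha}^{(i)}\rangle$ and the sets $E_{1},\ldots,E_{k}$ be as supplied by Definition \ref{Def4.1}, applied to (an enlargement of) $H$, and use Definition \ref{Def4.1}(2) to fix $m\in\mathbb{N}$ so that every sum of generators $n_{\gamma}^{(i)}$ indexed by sets $\gamma\nsubseteq\{1,\ldots,m\}$ lands in $\sigma(H)\cup\{0\}$. The point is that if from now on one only ever uses index sets $\alpha\subseteq\{m+1,m+2,\ldots\}$ that moreover satisfy $\alpha>\beta$, then $v_{\alpha}^{(i)}$, the translates $a+v_{\alpha}^{(i)}$, and their further translates by elements of $H$ are all defined and remain inside $\sigma(H)$. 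This single reduction is what lets the classical abelian-group argument be run verbatim inside the partial semigroup.

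The induction itself is on $d$. If $d\le 1$ the systems are of ``IP'' type, $v_{\alpha}^{(i)}=\sum_{n\in\alpha}m_{\{n\}}^{(i)}$, and the configuration is produced by the finitary (IP) van der Waerden theorem applied on the IP ring $FU(\langle\{m+1\},\{m+2\},\ldots\rangle)$ with indices restricted to be $>\beta$. For $d>1$, choose among $v^{(1)},\ldots,v^{(k)}$ one of maximal degree and, following Bergelson's polynomial exhaustion technique, replace the family by the ``derivatives'' $v_{\alpha\cup\delta}^{(i)}-v_{\alpha}^{(i)}$ and differences $v_{\alpha}^{(i)}-v_{\alpha}^{(j)}$, which form an adequate set of VIP systems of strictly smaller complexity; applying the induction hypothesis along a long van der Waerden-type chain yields base points $a_{0},a_{1},\ldots,a_{\ell}$ in $\sigma(H)$ from each of which a prescribed lower-degree configuration is monochromatic, and then two of the $a_{j}$ must share a $\chi$-colour, at which point the telescoping (``colour focusing'') that produced them assembles the full configuration $\{a,a+v_{\alpha}^{(1)},\ldots,a+v_{\alpha}^{(k)}\}$ in that colour, with $\alpha$ the union of all index sets used (so automatically $>\beta$ and inside $\{m+1,\ldots\}$).

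The genuinely delicate points, and where I expect the main obstacle to lie, are: (i) checking that the PET reduction keeps the reduced family \emph{adequate} in the precise sense of Definition \ref{Def4.1} --- in particular that condition (2) is inherited by derivatives and differences, which it is because the generators $n_{\gamma}^{(i)}$ are only added and subtracted in passing to the reduced system --- so that the induction is legitimate; and (ii) threading the constraints ``$\alpha>\beta$'', ``$\alpha\subseteq\{m+1,\ldots\}$'' and ``everything stays in $\sigma(H)$'' through the colour-focusing construction, so that every translate actually gets defined. Aside from this bookkeeping, the argument is the classical polynomial van der Waerden proof. (One could instead exhibit an idempotent in a suitable subsemigroup of $\delta S$ all of whose members contain a member of $\mathcal{A}$, paralleling the algebraic proofs of the Central Sets Theorem, but for the partition-regularity statement alone the combinatorial route above is the most economical one, and it is the route taken in \cite{key-5}.)
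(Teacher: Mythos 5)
The paper itself does not prove this statement: its ``proof'' is the citation to \cite{key-5}, Theorem~3.7, so the only meaningful comparison is with the argument there. In that source the partition regularity is not obtained by re-running a polynomial van der Waerden colour-focusing argument inside $S$; it is obtained by transfer. The generator data packaged into Definition~\ref{Def4.1} --- the auxiliary systems $u^{(i)}$ with generators $n_{\gamma}^{(i)}$, and the index sets $E_{i}$ with $m_{\gamma}^{(i)}=\sum_{t\in E_{i}}n_{\gamma}^{(t)}$, so that $v_{\alpha}^{(i)}=\sum_{t\in E_{i}}u_{\alpha}^{(t)}$ --- are precisely what is needed to realize the configuration $\{a,a+v_{\alpha}^{(1)},\ldots,a+v_{\alpha}^{(k)}\}$ as the push-forward, along the map determined by $\gamma\mapsto n_{\gamma}^{(t)}$, of an abstract set-polynomial configuration in a free object, and the partition regularity is then pulled back from the Bergelson--Leibman (set-)polynomial Hales--Jewett theorem; condition~(2) of Definition~\ref{Def4.1} is exactly what guarantees that the push-forward is defined and stays in $\sigma(H)$. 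So your closing parenthetical, that the in-$S$ combinatorial route ``is the route taken in \cite{key-5}'', is not accurate, and your proposal is a genuinely different (and, as written, incomplete) route.

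The concrete gap is the claim that the classical PET/colour-focusing argument ``can be run verbatim inside the partial semigroup''. Your reduction step replaces the family by derivatives $v_{\alpha\cup\delta}^{(i)}-v_{\alpha}^{(i)}$ and by differences $v_{\alpha}^{(i)}-v_{\alpha}^{(j)}$. The first kind can be salvaged without subtraction, since one may \emph{define} $v_{\alpha\cup\delta}^{(i)}-v_{\alpha}^{(i)}$ as $\sum_{\gamma\subseteq\alpha\cup\delta,\ \gamma\cap\delta\neq\emptyset}m_{\gamma}^{(i)}$ (this is exactly how the derived systems $q_{\beta}^{(i,F)}$ of Theorem~\ref{Theo4.5} are built). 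The second kind cannot: a commutative adequate partial semigroup has no inverses, and neither $v_{\alpha}^{(i)}-v_{\alpha}^{(j)}$ nor $m_{\gamma}^{(i)}-m_{\gamma}^{(j)}$ has any meaning in $S$, so your justification that adequacy is inherited ``because the generators are only added and subtracted'' invokes precisely the operation that is unavailable. Since cancelling leading terms of two systems against each other is what makes the PET complexity decrease, the induction cannot be carried out inside $S$ as described: one must either lift the entire colour-focusing argument to a free setting (words, or set-polynomials over $\mathcal{P}_{f}(\mathbb{N}\times\{1,\ldots,t\})$ with disjoint union) and push forward only at the end --- which is in substance the reduction to polynomial Hales--Jewett that \cite{key-5} actually performs --- or restructure the focusing so that differences of distinct systems are never formed, and the sketch indicates neither. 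The remaining ingredients (the compactness reduction, using Definition~\ref{Def4.1}(2) to keep all sums defined and in $\sigma(H)$, threading $\alpha>\beta$) are fine but routine; the partition-regularity engine itself is the part that is missing.
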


\begin{proof}
\cite{key-5}, Theorem $3.7$.
\end{proof}
\begin{thm}
\label{Theo4.4} Let $\left(S,+\right)$ be a commutative adequate
partial semigroup and let $\mathcal{A}$ be a shift invariant, adequately
partition regular family of finite subsets of $S$. Let $E\subseteq S$
be piecewise syndetic. Then $E$ contains a member of $\mathcal{A}$.
\end{thm}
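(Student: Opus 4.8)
The plan is to combine the combinatorial characterization of piecewise syndetic sets in an adequate partial semigroup (Lemma~$2.17$ of \cite{key-5}) with the defining property of an adequately partition regular family, and then to transport the resulting ``monochromatic'' cell into $E$ using shift invariance. Throughout, the real work is bookkeeping of definedness of the partial operation, so I would set things up so that every sum written down is automatically defined.

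First I would apply the characterization of piecewise syndeticity: since $E$ is piecewise syndetic, there is $H\in\mathcal{P}_f(S)$ such that for every finite nonempty $T\subseteq\sigma(H)$ there exists $x\in\sigma(T)$ with $T+x\subseteq\bigcup_{t\in H}t^{-1}E$. Set $r=|H|$ and enumerate $H=\{t_1,\dots,t_r\}$. Next I would feed the finite set $H$ and the number $r$ into the hypothesis that $\mathcal{A}$ is adequately partition regular, obtaining a finite set $F\subseteq\sigma(H)$ with the property that whenever $F=\bigcup_{i=1}^{r}C_i$, some $C_j$ contains a member of $\mathcal{A}$. Adequacy guarantees $\sigma(H)\ne\emptyset$, and $F$ is a finite nonempty subset of $\sigma(H)$, so the characterization above yields $x\in\sigma(F)$ with $F+x\subseteq\bigcup_{i=1}^{r}t_i^{-1}E$.

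Now I would define, for each $i\in\{1,\dots,r\}$, the cell $C_i=\{a\in F: a+x\in t_i^{-1}E\}$. These cells cover $F$ by the previous inclusion, so there is $j$ such that $C_j$ contains some $B\in\mathcal{A}$. For each $b\in B\subseteq C_j$ we have $t_j+(b+x)\in E$; since $x\in\sigma(F)$ the sum $b+x$ is defined, and commutativity together with associativity of the partial operation lets one conclude successively that $t_j+x$ is defined, that $t_j+x\in\sigma(B)$, and that $b+(t_j+x)=t_j+(b+x)\in E$. Hence $B+(t_j+x)\subseteq E$, and shift invariance of $\mathcal{A}$ (applied with the translate $t_j+x\in\sigma(B)$) gives $B+(t_j+x)\in\mathcal{A}$, which completes the proof.

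The main obstacle, as indicated, is purely the definedness checks intrinsic to partial semigroups: one must verify at each step that the sums appearing actually exist, and in particular that $t_j+x\in\sigma(B)$ so that shift invariance is applicable. The deliberate choices $F\subseteq\sigma(H)$ and $x\in\sigma(F)$, made possible by adequacy, are exactly what make these verifications routine; once they are dispatched, the argument is the familiar pigeonhole-and-translation scheme from the semigroup case.
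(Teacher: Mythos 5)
Your proposal is correct: the paper itself gives no argument for this theorem (its ``proof'' is just the citation to \cite{key-5}, Theorem $3.8$), and your pigeonhole-and-translation scheme --- Lemma $2.17$ to get $H$, adequate partition regularity with $r=\vert H\vert$ to get $F\subseteq\sigma\left(H\right)$, the cells $C_{i}=\left\{ a\in F:a+x\in t_{i}^{-1}E\right\} $, and then commutativity/associativity plus shift invariance to move the monochromatic member $B$ into $E$ via the translate $t_{j}+x\in\sigma\left(B\right)$ --- is precisely the standard proof given in that reference. The definedness bookkeeping you flag is handled correctly, so no gap remains.
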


\begin{proof}
\cite{key-5}, Theorem $3.8$.
\end{proof}
\begin{thm}
\label{Theo4.5} Let $\left\{ \langle v_{\alpha}^{\left(i\right)}\rangle_{\alpha\in\mathcal{P}_{f}\left(\mathbb{N}\right)}:1\leq i\leq k\right\} $
be an adequate set of VIP systems and pick $d,\text{ }t\in\mathbb{N}$,
a set $\left\{ \langle m_{\gamma}^{\left(i\right)}\rangle_{\gamma\in\mathcal{F}_{d}}:1\leq i\leq k\right\} $,
a set of VIP systems
\end{thm}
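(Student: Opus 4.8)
The plan is to run the familiar minimal-idempotent induction, using Theorems \ref{Theo4.3} and \ref{Theo4.4} as the combinatorial engine that, in the VIP setting, plays the role the $J$-set machinery plays in the ordinary Central Sets Theorem. First I would pass to the algebra of $\delta S$: since $C$ is central, fix an idempotent $p\in K(\delta S)$ with $C\in p$ and set $C^{*}=\{x\in C:x^{-1}C\in p\}$. By Lemma \ref{Lem1}, $C^{*}\in p$ and $x^{-1}(C^{*})\in p$ for every $x\in C^{*}$. Because $p\in K(\delta S)$, every member of $p$ is piecewise syndetic; in particular every finite intersection $C^{*}\cap\bigcap_{w}w^{-1}(C^{*})$ that arises below is piecewise syndetic, which is exactly the hypothesis fed into Theorem \ref{Theo4.4}.

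Next I would construct $\langle a_{n}\rangle_{n=1}^{\infty}$ in $S$ and $\langle\alpha_{n}\rangle_{n=1}^{\infty}$ in $\mathcal{P}_{f}(\mathbb{N})$ recursively on $n$, maintaining the inductive hypotheses that $\max\alpha_{n-1}<\min\alpha_{n}$ and that every ``admissible partial sum'' $w=\sum_{n\in G}\bigl(a_{n}+v_{\alpha_{n}}^{(i_{n})}\bigr)$, formed from a nonempty $G\subseteq\{1,\dots,n-1\}$ and a choice of indices $i_{n}\in\{1,\dots,k\}$, lies in $C^{*}$. At stage $n$, let $B$ be $C^{*}$ intersected with all sets $w^{-1}(C^{*})$ for the finitely many admissible partial sums $w$ built from $\{1,\dots,n-1\}$; then $B\in p$ by Lemma \ref{Lem1}, so $B$ is piecewise syndetic. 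Applying Theorem \ref{Theo4.3} with $\beta=\{1,\dots,\max\alpha_{n-1}\}$, the family $\mathcal{A}$ of configurations $\{a,a+v_{\alpha}^{(1)},\dots,a+v_{\alpha}^{(k)}\}$ with $\alpha>\beta$ and $a\in\sigma(\{v_{\alpha}^{(1)},\dots,v_{\alpha}^{(k)}\})$ is adequately partition regular, and it is shift invariant; so Theorem \ref{Theo4.4} produces a member of $\mathcal{A}$ inside $B$, i.e.\ some $a_{n}\in S$ and $\alpha_{n}\in\mathcal{P}_{f}(\mathbb{N})$ with $\min\alpha_{n}>\max\alpha_{n-1}$ and $a_{n}+v_{\alpha_{n}}^{(i)}\in B$ for every $i$. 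This preserves the hypotheses: for admissible $w$ from $\{1,\dots,n-1\}$ and any $i$ one gets $w+\bigl(a_{n}+v_{\alpha_{n}}^{(i)}\bigr)\in C^{*}$, and $a_{n}+v_{\alpha_{n}}^{(i)}\in C^{*}$ handles the one-term sums.

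Finally I would read off the conclusion: for $F=\{n_{1}<\dots<n_{r}\}\in\mathcal{P}_{f}(\mathbb{N})$ and $i\in\{1,\dots,k\}$, telescope $\sum_{j=1}^{r}\bigl(a_{n_{j}}+v_{\alpha_{n_{j}}}^{(i)}\bigr)$, using at each step that the running sum $w$ lies in $C^{*}$ and the next block lies in $w^{-1}(C^{*})$, so the total lies in $C^{*}\subseteq C$; the disjointness of the $\alpha_{n}$'s (forced by $\alpha_{n}>\{1,\dots,\max\alpha_{n-1}\}$) together with the generating data $\langle m_{\gamma}^{(i)}\rangle$, $\langle n_{\gamma}^{(i)}\rangle$ and clause (2) of Definition \ref{Def4.1} is what makes all these sums defined in the partial operation, and --- if the intended conclusion places the VIP values at the union $\bigcup_{j}\alpha_{n_{j}}$ --- lets the degree-$d$ polynomial identity of Definitions \ref{def1.4}--\ref{def1.5} reassemble the block values into the single value at that union. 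The main obstacle is exactly this definedness bookkeeping: one must choose each $\alpha_{n}$ large enough that every sum of the relevant $n_{\gamma}^{(i)}$'s lands in the appropriate $\sigma(H)\cup\{0\}$, and one must check that $\mathcal{A}$ meets the shift-invariance and $\sigma$-definedness hypotheses of Theorem \ref{Theo4.4} verbatim (a short computation using commutativity); the idempotent induction itself is routine.
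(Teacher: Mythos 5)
The statement you were asked to prove is Theorem \ref{Theo4.5}, which is a purely structural assertion about VIP systems: starting from an adequate set $\left\{ \langle v_{\alpha}^{\left(i\right)}\rangle\right\}$ with generating data $\langle m_{\gamma}^{\left(i\right)}\rangle$, $\langle n_{\gamma}^{\left(i\right)}\rangle$, and given $\alpha_{1}<\alpha_{2}<\ldots<\alpha_{s}$, one forms $b_{\varphi}^{\left(i,F\right)}=\sum_{\psi\subseteq\cup_{j\in F}\alpha_{j},\,\vert\psi\vert\leq d-\vert\varphi\vert}m_{\varphi\cup\psi}^{\left(i\right)}$ and $q_{\beta}^{\left(i,F\right)}=\sum_{\varphi\subseteq\beta,\varphi\in\mathcal{F}_{d}}b_{\varphi}^{\left(i,F\right)}$, and the claim is that the family $\left\{ \langle q_{\beta}^{\left(i,F\right)}\rangle_{\beta>\alpha_{s}}\right\}$ is again an adequate set of VIP systems, i.e.\ satisfies clauses (1)--(3) of Definition \ref{Def4.1}. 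No central set, idempotent, or piecewise syndetic set appears in this statement; the paper disposes of it by citing Theorem $3.10$ of \cite{key-5}, and a self-contained proof would consist of exhibiting the generators of the $q$-systems (the $b_{\varphi}^{\left(i,F\right)}$, of degree at most $d$), deducing the definedness and summability clause (2) for the new generators from clause (2) for the old ones (using $\beta>\alpha_{s}$ so that the relevant index sets avoid a prescribed initial segment of $\mathbb{N}$), and producing the index sets required by clause (3).

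Your proposal instead runs the minimal-idempotent induction with $C^{*}$, Theorem \ref{Theo4.3} and Theorem \ref{Theo4.4}; that is the proof scheme of the section's main result, Theorem \ref{Mms} (the Central Sets Theorem for VIP systems), not of Theorem \ref{Theo4.5}. None of the objects your argument produces (the $a_{n}$, the admissible partial sums in $C^{*}$) occur in the statement of Theorem \ref{Theo4.5}, and the conclusion you reach (sums landing in $C$) is not its conclusion. Moreover, the one point where your sketch brushes against the actual content of Theorem \ref{Theo4.5} --- the remark that the degree-$d$ identity should ``reassemble the block values into the single value at the union'' --- is exactly the step that cannot be waved away: in general $v_{\gamma\cup\beta}^{\left(i\right)}\neq v_{\gamma}^{\left(i\right)}+v_{\beta}^{\left(i\right)}$, and the paper's proof of Theorem \ref{Mms} copes with this by introducing the derived systems $q_{\beta}^{\left(i,\gamma\right)}=v_{\gamma\cup\beta}^{\left(i\right)}-v_{\gamma}^{\left(i\right)}$ and invoking Theorem \ref{Theo4.5} to know that these, together with the original systems, are again an adequate set of VIP systems, so that Theorems \ref{Theo4.3} and \ref{Theo4.4} apply to them. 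So the proposal proves the wrong statement, and even read as a proof of Theorem \ref{Mms} it leaves as a gap precisely the point for which Theorem \ref{Theo4.5} is needed.
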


$\left\{ \langle u_{\alpha}^{\left(i\right)}=\sum_{\gamma\subseteq\alpha,\gamma\in\mathcal{F}_{d}}n_{\gamma}^{\left(i\right)}\rangle_{\alpha\in\mathcal{P}_{f}\left(\mathbb{N}\right)}:1\leq i\leq t\right\} $,

and sets $E_{1},E_{2},\ldots,E_{k}\subseteq\left\{ 1,2,\ldots,t\right\} $
satisfying conditions (1), (2), and (3) of Definition \ref{Def4.1}.
Let $\alpha_{1},\alpha_{2},\ldots,\alpha_{s}\in\mathcal{P}_{f}\left(\mathbb{N}\right)$
with $\alpha_{1}<\alpha_{2}<\ldots<\alpha_{s}$. For $F\subseteq\left\{ 1,2,\ldots,s\right\} $,
$i\in\left\{ 1,2,\ldots,k\right\} $ and $\varphi\in\mathcal{F}_{d}$
with $\varphi>\alpha_{s}$, and $1\leq i\leq k$, let

$b_{\varphi}^{\left(i,F\right)}=\sum_{\psi\subseteq\cup_{j\in F}\alpha_{j},\mid\psi\mid\leq d-\mid\varphi\mid}m_{\varphi\cup\psi}^{\left(i\right)}$.

For $F\subseteq\left\{ 1,2,\ldots,s\right\} $, $i\in\left\{ 1,2,\ldots,k\right\} $,
and $\beta\in\mathcal{F}_{d}$ with $\beta>\alpha_{s}$, let

$q_{\beta}^{\left(i,F\right)}=\sum_{\varphi\subseteq\beta,\varphi\in\mathcal{F}_{d}}b_{\varphi}^{\left(i,F\right)}$.

Then $\left\{ \langle q_{\beta}^{\left(i,F\right)}\rangle_{\beta\in\mathcal{P}_{f}\left(\mathbb{N}\right),\beta>\alpha_{s}}:i\in\left\{ 1,2,\ldots,k\right\} ,\text{ }F\subseteq\left\{ 1,2,\ldots,s\right\} \right\} $
is an adequate set of VIP systems.
\begin{proof}
\cite{key-5}, Theorem $3.10$.
\end{proof}
Here is our main theorem of this section. 
\begin{thm}
\label{Mms} Let $\left(S,+\right)$ be commutative adequate partial
semigroup, let $p$ be an idempotent in $K\left(\delta S\right)$,
$R:\mathcal{P}_{f}(\mathbb{N})\to p$ is a function and let
\[
\left\{ \langle v_{\alpha}^{\left(i\right)}\rangle_{\alpha\in\mathcal{P}_{f}\left(\mathbb{N}\right)}:1\leq i\leq k\right\} 
\]
 be $k$-many adequate set of VIP systems. Then there exist sequences
$\langle a_{n}\rangle_{n=1}^{\infty}$ in $S$ and $\langle\alpha_{n}\rangle_{n=1}^{\infty}$
in $\mathcal{P}_{f}\left(\mathbb{N}\right)$ such that $\alpha_{n}<\alpha_{n+1}$
for every $n$ and for every $F\in\mathcal{P}_{f}\left(\mathbb{N}\right)$,
$\gamma=\cup_{t\in F}\alpha_{t}$ such that 
\end{thm}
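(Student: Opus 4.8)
The plan is to construct $\langle a_n\rangle_{n=1}^{\infty}$ in $S$ and $\langle\alpha_n\rangle_{n=1}^{\infty}$ in $\mathcal P_f(\mathbb N)$ by recursion on $n$, choosing $a_n$ and $\alpha_n$ at the $n$-th step with $\alpha_1<\alpha_2<\cdots$, and maintaining as induction hypothesis that
\[
\sum_{t\in F}a_t+v^{(i)}_{\gamma_F}\in R\bigl(\{\min F\}\bigr)^{*}\qquad\text{for all }\emptyset\ne F\subseteq\{1,\dots,n\}\text{ and all }i\in\{1,\dots,k\},
\]
where $\gamma_F=\bigcup_{t\in F}\alpha_t$ and, for $C\in p$, $C^{*}=\{x\in C:x^{-1}C\in p\}$. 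Reading the (truncated) conclusion as: for every $F$, with $\gamma=\bigcup_{t\in F}\alpha_t$, one has $\sum_{t\in F}a_t+v^{(i)}_{\gamma}\in R(\{\min F\})$ for all $i$ --- where the label $\{\min F\}$ plays, in this ``single $F$'' formulation, exactly the role of the minimal link $G_1$ in Theorem \ref{Pu}, the only property of it that the argument uses being that it does not change when $F$ is enlarged by an index exceeding $\max F$. Since $p\in\overline C\cap K(\delta S)$ whenever $C\in p$, every member of $p$ is piecewise syndetic; and by Lemma \ref{Lem1}, $C^{*}\in p$ and $x^{-1}(C^{*})\in p$ whenever $x\in C^{*}$. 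These are the facts that let me feed the sets $R(\cdot)^{*}$ into Theorems \ref{Theo4.3}--\ref{Theo4.5}.

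For the recursion step --- of which the base case $n=1$ is the instance $n=0$, with $\{1,\dots,0\}=\emptyset$ --- I suppose $a_1,\dots,a_n$, $\alpha_1,\dots,\alpha_n$ chosen and fix the degree $d$, the generating families, and the auxiliary VIP systems witnessing that $\{\langle v^{(i)}_\alpha\rangle:i\le k\}$ is adequate (Definition \ref{Def4.1}). The computational heart of the step is the identity
\[
v^{(i)}_{\gamma_{F_0}\cup\beta}=v^{(i)}_{\gamma_{F_0}}+q^{(i,F_0)}_{\beta}\qquad(\beta>\alpha_n,\ F_0\subseteq\{1,\dots,n\}),
\]
obtained by splitting each $\delta\in\mathcal F_d$ with $\delta\subseteq\gamma_{F_0}\cup\beta$ according to $\delta\cap\gamma_{F_0}$ and $\delta\cap\beta$, where $\langle q^{(i,F_0)}_\beta\rangle_{\beta>\alpha_n}$ is the derived VIP system of Theorem \ref{Theo4.5} attached to the chain $\alpha_1<\cdots<\alpha_n$ (and $q^{(i,\emptyset)}_\beta=v^{(i)}_\beta$). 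By Theorem \ref{Theo4.5} the finite family $\{\langle q^{(i,F_0)}_\beta\rangle_{\beta>\alpha_n}:i\le k,\ F_0\subseteq\{1,\dots,n\}\}$ --- which contains the original systems as its $F_0=\emptyset$ members --- is again an adequate set of VIP systems, so Theorem \ref{Theo4.3}, applied with lower bound $\alpha_n$, yields a family $\mathcal A_n$ of finite subsets of $S$ that is adequately partition regular, and one checks (from commutativity of $+$) that $\mathcal A_n$ is also shift invariant. Writing $x_{F_0,i}=\sum_{t\in F_0}a_t+v^{(i)}_{\gamma_{F_0}}$ for $\emptyset\ne F_0\subseteq\{1,\dots,n\}$, I then set
\[
B=R(\{n+1\})^{*}\cap\bigcap\bigl\{\,x_{F_0,i}^{-1}\bigl(R(\{\min F_0\})^{*}\bigr):\emptyset\ne F_0\subseteq\{1,\dots,n\},\ i\le k\,\bigr\}.
\]
By the induction hypothesis each $x_{F_0,i}\in R(\{\min F_0\})^{*}$, so by Lemma \ref{Lem1} every set in the intersection belongs to $p$; hence $B\in p$ and $B$ is piecewise syndetic, and Theorem \ref{Theo4.4} produces $\beta>\alpha_n$ and $a\in\sigma\bigl(\{q^{(i,F_0)}_\beta\}\bigr)$ with $a\in B$ and $a+q^{(i,F_0)}_\beta\in B$ for all $i\le k$ and all $F_0\subseteq\{1,\dots,n\}$, including $F_0=\emptyset$. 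I put $a_{n+1}=a$ and $\alpha_{n+1}=\beta$.

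To verify that the hypothesis survives for $\{1,\dots,n+1\}$: when $\emptyset\ne F\subseteq\{1,\dots,n\}$ there is nothing to check; when $F=\{n+1\}$ one has $\sum_{t\in F}a_t+v^{(i)}_{\gamma_F}=a+q^{(i,\emptyset)}_\beta\in B\subseteq R(\{n+1\})^{*}=R(\{\min F\})^{*}$; and when $F=F_0\cup\{n+1\}$ with $\emptyset\ne F_0\subseteq\{1,\dots,n\}$ the identity above gives $\sum_{t\in F}a_t+v^{(i)}_{\gamma_F}=x_{F_0,i}+\bigl(a+q^{(i,F_0)}_\beta\bigr)$, and since $a+q^{(i,F_0)}_\beta\in B\subseteq x_{F_0,i}^{-1}\bigl(R(\{\min F_0\})^{*}\bigr)$ this sum is defined and lies in $R(\{\min F_0\})^{*}=R(\{\min F\})^{*}$ (here $\min F_0=\min F$ since $\max F_0\le n<n+1$). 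This closes the recursion, and since every $F\in\mathcal P_f(\mathbb N)$ lies in some $\{1,\dots,n\}$, the conclusion follows, in fact with the sharper ``$\in R(\{\min F\})^{*}$''. The step I expect to be the main obstacle is the second ingredient: producing and handling the enlarged, $n$-dependent VIP family $\{\langle q^{(i,F_0)}_\beta\rangle\}$, i.e. confirming that folding the interactions between the already-fixed blocks $\alpha_t$ ($t\in F_0$) and the prospective new block $\beta$ into new VIP systems preserves all three clauses of Definition \ref{Def4.1} --- most delicately clause (2), the uniform definedness of the relevant sums over each $\sigma(H)$ --- which is precisely Theorem \ref{Theo4.5}, to be invoked with the correct $d$ and the constraint $\varphi>\alpha_n$ on the heads $\varphi$. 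The remaining points are comparatively routine: the shift invariance of $\mathcal A_n$ (from commutativity and associativity of $*$ on the sums that are defined), and the bookkeeping ensuring every ambient sum is defined, arranged by always drawing the new block $\beta$ and the new constant $a$ from the appropriate $\sigma$-sets.
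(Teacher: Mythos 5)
Your proposal follows essentially the same route as the paper's proof: the same induction on $n$, the same derived systems $q^{(i,\gamma)}_{\beta}=v^{(i)}_{\gamma\cup\beta}-v^{(i)}_{\gamma}$ whose adequacy comes from Theorem \ref{Theo4.5}, adequate partition regularity from Theorem \ref{Theo4.3}, shift invariance, piecewise syndeticity of members of $p\in K(\delta S)$, Lemma \ref{Lem1} for the starred sets, and a finite intersection of shifted target sets from which Theorem \ref{Theo4.4} extracts $a_{n+1}$ and $\alpha_{n+1}$. Two bookkeeping differences are worth noting. First, the paper's conclusion (the display following the theorem statement, which was cut off in what you saw) also demands the plain sums, $\sum_{t\in F}a_t\in R(F)$; your induction hypothesis and your set $B$ only track the $v$-shifted sums, so you would need to add the sets $\bigl(\sum_{t\in F_0}a_t\bigr)^{-1}\bigl(R(\cdot)^{*}\bigr)$ to $B$ and carry $\sum_{t\in F}a_t\in R(\cdot)^{*}$ along in the hypothesis --- exactly the role of the family $-\sum_{t\in H}a_t+R(H)^{*}$ inside the paper's set $D$; this is a routine repair. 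Second, you index the target by $R(\{\min F\})$ while the paper claims $R(F)$; observe, however, that the paper's own verification only places $\sum_{t\in F}a_t+v^{(i)}_{\mu}$ in $R(F\setminus\{n+1\})^{*}$ and then appeals to an inclusion $R(H)\subsetneq R(F)$ that an arbitrary function $R$ need not satisfy (the opening reduction to a decreasing sequence $C_n$ concerns an $\mathbb{N}$-indexed sequence and does not supply it), so a label that is unchanged when $F$ is enlarged by indices above $\max F$ --- such as your $\{\min F\}$ --- is precisely what this induction actually delivers; in that respect your formulation is the one the argument supports.
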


\[
\left\{ \sum_{t\in F}a_{t}\right\} \cup\left\{ \sum_{t\in F}a_{t}+v_{\gamma}^{\left(i\right)}:1\leq i\leq k\right\} \subseteq R\left(F\right).
\]

\begin{proof}
Let $C_{n}\in p\in K\left(\delta S\right)$. We may assume that $C_{n+1}\subseteq C_{n}$
for each $n\in\mathbb{N}$ ( If not, consider $B_{n}=\cap_{i=1}^{n}C_{i}$,
so $B_{n+1}\subseteq B_{n}$). For each $n\in\mathbb{N}$, let 
\[
C_{n}^{*}=\left\{ x\in C_{n}:-x+C_{n}\in p\right\} .
\]
Then for each $x\in C_{n}^{*}$, $-x+C_{n}^{*}\in p$ by Lemma \ref{Lem1}.
Let 
\[
\mathcal{A}=\left\{ \left\{ a,a+v_{\alpha}^{\left(1\right)},a+v_{\alpha}^{\left(2\right)},\ldots,a+v_{\alpha}^{\left(k\right)}\right\} :\alpha\in\mathcal{P}_{f}\left(\mathbb{N}\right),a\in\sigma\left(\left\{ v_{\alpha}^{\left(1\right)},v_{\alpha}^{\left(2\right)},\ldots,v_{\alpha}^{\left(k\right)}\right\} \right)\right\} .
\]
Then by Theorem \ref{Theo4.3}, $\mathcal{A}$ is adequately partition
regular and $\mathcal{A}$ is trivially shift invariant. Since for
each $F\in\mathcal{P}_{f}\left(\mathbb{N}\right)$, $R\left(F\right)^{*}\in p$
and $p\in K\left(\delta S\right)$, $R\left(F\right)^{*}$ is piecewise
syndetic. So by Theorem \ref{Theo4.4}, for some $a_{1}\in S$ and
$\alpha_{1}\in\mathcal{P}_{f}\left(\mathbb{N}\right)$ such that 
\[
\left\{ a_{1},a_{1}+v_{\alpha_{1}}^{\left(1\right)},a_{1}+v_{\alpha_{1}}^{\left(2\right)},\ldots,a_{1}+v_{\alpha_{1}}^{\left(k\right)}\right\} \subseteq R\left(F\right)^{*}
\]
 for every $n\in\mathbb{N}$. Now we proceed the proof by induction
on $n$, let $n\in\mathbb{N}$ and assume that we have chosen $\langle a_{t}\rangle_{t=1}^{n}$
in $S$ and $\langle\alpha_{t}\rangle_{t=1}^{n}$ in $\mathcal{P}_{f}\left(\mathbb{N}\right)$
such that 

$\left(1\right)$ for $t\in\left\{ 1,2,\ldots,n-1\right\} $, if any,
$\alpha_{t}<\alpha_{t+1}$ and

$\left(2\right)$ for $\emptyset\neq F\subseteq\left\{ 1,2,\ldots,n\right\} $,
if $\gamma=\cup_{t\in F}\alpha_{t}$, then $\sum_{t\in F}a_{t}\in R\left(F\right)^{*}$
and for each $i\in\left\{ 1,2,\ldots,k\right\} $, $\sum_{t\in F}a_{t}+v_{\gamma}^{\left(i\right)}\in R\left(F\right)^{*}$. 

For each $\gamma\in FU\left(\langle\alpha_{t}\rangle_{t=1}^{n}\right)$
and each $i\in\left\{ 1,2,\ldots,k\right\} $, let 
\[
\langle q_{\beta}^{\left(i,\gamma\right)}\rangle_{\beta\in\mathcal{P}_{f}\left(\mathbb{N}\right)}=\langle v_{\gamma\cup\beta}^{\left(i\right)}-v_{\gamma}^{\left(i\right)}\rangle_{\beta\in\mathcal{P}_{f}\left(\mathbb{N}\right),\beta>\alpha_{n}}.
\]
 By Theorem \ref{Theo4.5}, the family,

\[
\begin{array}{c}
\left\{ \langle q_{\beta}^{\left(i,\gamma\right)}\rangle_{\beta\in\mathcal{P}_{f}\left(\mathbb{N}\right),\beta>\alpha_{n}}:1\leq i\leq k,\text{ }\gamma\in FU\left(\langle\alpha_{t}\rangle_{t=1}^{n}\right)\right\} \cup\\
\left\{ \langle v_{\beta}^{\left(i\right)}\rangle_{\beta\in\mathcal{P}_{f}\left(\mathbb{N}\right)}:1\leq i\leq k\right\} 
\end{array}
\]

is an adequate set of VIP systems. Let 
\[
\mathcal{B}=\left\{ \begin{array}{c}
\left\{ a\right\} \cup\left\{ a+v_{\alpha}^{\left(i\right)}:1\leq i\leq k\right\} \cup\\
\bigcup_{\gamma\in FU\left(\langle\alpha_{t}\rangle_{t=1}^{n}\right)}\left\{ a+q_{\alpha}^{\left(i,\gamma\right)}:1\leq i\leq k\right\} :\alpha\in\mathcal{P}_{f}\left(\mathbb{N}\right),\text{ }\alpha>\alpha_{n},\text{ and }\\
a\in\sigma\left(\left\{ v_{\alpha}^{\left(i\right)}:1\leq i\leq k\right\} \cup\left\{ q_{\alpha}^{\left(i,\gamma\right)}:1\leq i\leq k,\text{ }\gamma\in FU\left(\langle\alpha_{t}\rangle_{t=1}^{n}\right)\right\} \right)
\end{array}\right\} .
\]
Then by Theorem \ref{Theo4.3}, $\mathcal{B}$ is adequately partition
regular. Let 
\[
D=R\left(F\right)^{*}\cap\bigcap_{m=1}^{n}\left[\begin{array}{c}
\bigcap\left\{ -\sum_{t\in H}a_{t}+R\left(H\right)^{*}:\emptyset\neq H\subseteq\left\{ 1,2,\ldots,n\right\} ,\text{ }m=\min H\right\} \cap\\
\bigcap\left\{ \begin{array}{c}
-\left(\sum_{t\in H}a_{t}+v_{\gamma}^{\left(i\right)}\right)+R\left(H\right)^{*}:1\leq i\leq k,\,\\
\emptyset\neq H\subseteq\left\{ 1,2,\ldots,n\right\} ,\text{ and }\gamma=\cup_{t\in H}\alpha_{t}
\end{array}\right\} 
\end{array}\right].
\]

Where $\emptyset\neq F\subseteq\left\{ 1,2,\ldots,n+1\right\} $.
Then $D\in p$ and $D$ is piecewise syndetic. So, by Theorem \ref{Theo4.4},
for some $\alpha_{n+1}\in\mathcal{P}_{f}\left(\mathbb{N}\right)$
such that $\alpha_{n+1}>\alpha_{n}$ and some 
\[
a_{n+1}\in\sigma\left(\left\{ v_{\alpha_{n+1}}^{\left(i\right)}:1\leq i\leq k\right\} \cup\left\{ q_{\alpha_{n+1}}^{\left(i,\gamma\right)}:1\leq i\leq k\text{ and }\gamma\in FU\left(\langle\alpha_{t}\rangle_{t=1}^{n}\right)\right\} \right)
\]
 such that 
\[
\begin{array}{c}
\left\{ a_{n+1}\right\} \cup\left\{ a_{n+1}+v_{\alpha_{n+1}}^{\left(i\right)}:i\in\left\{ 1,2,\ldots,k\right\} \right\} \cup\\
\bigcup_{\gamma\in FU\left(\langle\alpha_{t}\rangle_{t=1}^{n}\right)}\left\{ a_{n+1}+q_{\alpha_{n+1}}^{\left(i,\gamma\right)}:i\in\left\{ 1,2,\ldots,k\right\} \right\} \subseteq D
\end{array}
\]
 Induction hypothesis $\left(1\right)$ trivially holds. To verify
hypothesis $\left(2\right)$, let $\emptyset\neq F\subseteq\left\{ 1,2,\ldots,n+1\right\} $
and let $\gamma=\cup_{t\in F}\alpha_{t}$ . If $n+1\notin F$, the
conclusion holds by assumption. If $F=\left\{ n+1\right\} $, then
we have 
\[
\left\{ a_{n+1}\right\} \cup\left\{ a_{n+1}+v_{\alpha_{n+1}}^{\left(i\right)}:i\in\left\{ 1,2,\ldots,k\right\} \right\} \subseteq D\subseteq R\left(F\right)^{*}.
\]
So, let us assume that $\left\{ n+1\right\} \subsetneq F$, let $H=F$
\textbackslash$\left\{ n+1\right\} $, and let $\mu=\cup_{t\in F}\alpha_{t}$.
Then $a_{n+1}\in D\subseteq-\sum_{t\in H}a_{t}+R\left(H\right)^{*}$.

Let $\gamma=\cup_{t\in H}\alpha_{t}$, and let $i\in$$\left\{ 1,2,\ldots,k\right\} $.
Then
\[
a_{n+1}+q_{\alpha_{n+1}}^{\left(i,\gamma\right)}\in D\subseteq-\left(\sum_{t\in H}a_{t}+v_{\gamma}^{\left(i\right)}\right)+R\left(H\right)^{*}
\]

and so $\left(\sum_{t\in H}a_{t}+v_{\gamma}^{\left(i\right)}\right)+\left(a_{n+1}+q_{\alpha_{n+1}}^{\left(i,\gamma\right)}\right)\in R\left(H\right)^{*}$.
That is, 
\[
\sum_{t\in F}a_{t}+v_{\mu}^{\left(i\right)}=\left(\sum_{t\in H}a_{t}+a_{n+1}\right)+\left(v_{\gamma}^{\left(i\right)}+q_{\alpha_{n+1}}^{\left(i,\gamma\right)}\right)\in R\left(H\right)^{*}\subseteq R\left(H\right)\subsetneq R\left(F\right).
\]

This completes the proof.
\end{proof}

\section{APPLICATION}

The application of Theorem \ref{Mms} is briefly discussed in this
section:

\begin{defn}
Let $l\in\mathbb{N}$, a set-monomial $\left(\text{over }\mathbb{N}^{l}\right)$
in the variable $X$ is an expression $m\left(X\right)=S_{1}\times S_{2}\times\ldots\times S_{l}$,
where for each $i\in\left\{ 1,2,\ldots,l\right\} $, $S_{i}$ is either
the symbol $X$ or a nonempty singleton subset of $\mathbb{N}$ (these
are called coordinate coefficients). The degree of the monomial is
the number of times the symbol $X$ appears in the list $S_{1},\ldots,S_{l}$.
For example, taking $l=3$, $m\left(X\right)=\left\{ 5\right\} \times X\times X$
is a set-monomial of degree 2, while $m\left(X\right)=X\times\left\{ 17\right\} \times\left\{ 2\right\} $
is a set-monomial of degree 1. A\emph{ set-polynomial} is an expression
of the form $p\left(X\right)=m_{1}\left(X\right)\cup m_{2}\left(X\right)\cup\ldots\cup m_{k}\left(X\right)$,
where $k\in\mathbb{N}$ and $m_{1}\left(X\right)\cup m_{2}\left(X\right)\cup\ldots\cup m_{k}\left(X\right)$
are set-monomials. The degree of a set-polynomial is the largest degree
of its set-monomial ``summands'', and its constant term consists
of the ``sum'' of those $m_{i}$ that are constant, i.e., of degree
zero. 
\end{defn}

\begin{lem}
\label{Lem 5.3} Let $l\in\mathbb{N}$ and let $\mathcal{P}$ be a
finite family of set polynomial over 
\[
\left(\mathcal{P}_{f}\left(\mathbb{N}^{l}\right),+\right)
\]
 whose constant terms are empty. Then there exists $q\in\mathbb{N}$
and an IP ring $\mathcal{F}^{\left(1\right)}=\left\{ \alpha\in\mathcal{P}_{f}\left(\mathbb{N}\right):\min\alpha>q\right\} $
such that $\left\{ \langle P\left(\alpha\right)\rangle_{\alpha\in\mathcal{F}^{\left(1\right)}}:P\left(X\right)\in\mathcal{P}\right\} $
is an adequate set of VIP systems.
\end{lem}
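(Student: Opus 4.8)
The plan is to extract the data required by Definition~\ref{Def4.1} directly from the set-polynomials, using the individual set-monomials occurring in $\mathcal{P}$ as the auxiliary VIP systems. Write $\mathcal{P}=\{P_{1},\dots,P_{k}\}$, let $m_{1},\dots,m_{T}$ enumerate the distinct set-monomials occurring in members of $\mathcal{P}$, and set $d=\max_{P\in\mathcal{P}}\deg P=\max_{1\le s\le T}\deg m_{s}$; since every $P\in\mathcal{P}$ has empty constant term, each $m_{s}$ has degree at least $1$. Let $q$ be the largest coordinate coefficient appearing in any $m_{s}$, put $\mathcal{F}^{(1)}=\{\alpha\in\mathcal{P}_{f}(\mathbb{N}):\min\alpha>q\}$, and recall that on $S=\mathcal{P}_{f}(\mathbb{N}^{l})$ the operation $+$ is disjoint union, so $\sigma(H)=\{B\in S:B\cap\bigcup_{A\in H}A=\emptyset\}$. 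The basic fact is a disjointness observation: if $m\ne m'$ are distinct set-monomials over $\mathbb{N}^{l}$ with all coordinate coefficients at most $q$, and $\alpha\in\mathcal{F}^{(1)}$, then $m(\alpha)\cap m'(\alpha)=\emptyset$; indeed $m$ and $m'$ differ in some coordinate $p$, and at $p$ the forced value is $>q$ on the side where $p$ is an $X$-slot (it is taken from $\alpha$) and is a coordinate coefficient $\le q$ on the other side, or is two distinct coefficients. The same argument applies with the hypothesis ``$\alpha\in\mathcal{F}^{(1)}$'' replaced by ``$\gamma\subseteq\alpha$ for some $\alpha\in\mathcal{F}^{(1)}$'', i.e.\ $\min\gamma>q$. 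Consequently, for $P=m_{j_{1}}\cup\dots\cup m_{j_{r}}$ and $\alpha\in\mathcal{F}^{(1)}$ the union $P(\alpha)=\bigcup_{\nu}m_{j_{\nu}}(\alpha)$ is in fact the disjoint union $\sum_{\nu}m_{j_{\nu}}(\alpha)$ in $S$.

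Next I would build the generating sequences. For a set-monomial $m$ with set of $X$-slots $I\neq\emptyset$ and coefficients $(c_{p})_{p\notin I}$, and for $\gamma\in\mathcal{F}_{d}$, let $m_{\gamma}\subseteq\mathbb{N}^{l}$ be the set of tuples $\vec{x}$ with $x_{p}=c_{p}$ for $p\notin I$ and $\{x_{p}:p\in I\}=\gamma$. Then $m_{\gamma}$ is independent of $\alpha$, $m_{\gamma}\ne\emptyset$ exactly when $1\le|\gamma|\le\deg m$, and for $\alpha\in\mathcal{F}^{(1)}$ one has $m(\alpha)=\sum_{\gamma\subseteq\alpha,\,\gamma\in\mathcal{F}_{d}}m_{\gamma}$, the union being disjoint because the set $\{x_{p}:p\in I\}$ is determined by $\vec{x}$. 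Hence $n^{(s)}_{\gamma}:=(m_{s})_{\gamma}$ generates the VIP system $u^{(s)}:=\langle m_{s}(\alpha)\rangle_{\alpha\in\mathcal{F}^{(1)}}$, of degree $\deg m_{s}\le d$, giving the $t=T$ auxiliary systems. Setting $E_{i}=\{s:m_{s}\text{ occurs in }P_{i}\}$ and $m^{(i)}_{\gamma}=(P_{i})_{\gamma}:=\sum_{s\in E_{i}}(m_{s})_{\gamma}$, the disjointness of distinct monomials makes all these sums legitimate and yields $P_{i}(\alpha)=\sum_{\gamma\subseteq\alpha,\,\gamma\in\mathcal{F}_{d}}m^{(i)}_{\gamma}$, which is condition (1) of Definition~\ref{Def4.1}, whereas $m^{(i)}_{\gamma}=\sum_{s\in E_{i}}n^{(s)}_{\gamma}$ is condition (3).

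The real content is condition (2). Given $H\in\mathcal{P}_{f}(S)$, put $U=\bigcup_{A\in H}A$, a finite subset of $\mathbb{N}^{l}$, and choose $m>q$ exceeding every coordinate of every point of $U$. Let $\gamma_{1},\dots,\gamma_{l'}\in\mathcal{F}_{d}$ be distinct, each not contained in $\{1,\dots,m\}$ and each a subset of a member of $\mathcal{F}^{(1)}$ (so $\min\gamma_{j}>q$). I would check: (a) the pieces $(m_{s})_{\gamma_{j}}$ with $1\le s\le T$ and $1\le j\le l'$ are pairwise disjoint --- by the monomial-disjointness argument when $s\ne s'$ (using $\min\gamma_{j}>q$), and immediately when $s=s'$, $\gamma_{j}\ne\gamma_{j'}$, since then the set $\{x_{p}:p\in I_{s}\}$ would have to equal both $\gamma_{j}$ and $\gamma_{j'}$; and (b) each nonempty $(m_{s})_{\gamma_{j}}$ is disjoint from $U$, because its tuples have a coordinate equal to an element of $\gamma_{j}$ that exceeds $m$. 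Therefore $\sum_{s=1}^{T}\sum_{j=1}^{l'}n^{(s)}_{\gamma_{j}}$ is defined and lies in $\sigma(H)\cup\{0\}$, establishing condition (2) and hence the lemma.

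The step I expect to cause the most trouble is condition (2), and within it the precise specification of which index sets $\gamma$ must be controlled. Read literally, Definition~\ref{Def4.1} lets the $\gamma$'s range over all of $\mathcal{F}_{d}$, but our generating sequences only reproduce $P(\alpha)$ for $\alpha\in\mathcal{F}^{(1)}$, and in fact the literal condition can fail: two distinct monomials may give overlapping pieces for a $\gamma$ whose small elements coincide with coordinate coefficients, making the sum in (2) undefined. The natural --- and, I believe, intended --- reading is that the adequate set of VIP systems is taken relative to the IP ring $\mathcal{F}^{(1)}$, so that the relevant $\gamma$'s satisfy $\min\gamma>q$; this is exactly what the disjointness arguments need, and it is consistent with the conclusion of the lemma being phrased over $\mathcal{F}^{(1)}$. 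Making this relativization precise, in line with the notion of adequacy over an IP ring used in \cite{key-5}, is the main bookkeeping obstacle; once it is set up, conditions (1)--(3) follow as sketched.
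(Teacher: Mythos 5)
Your construction is correct, and it is essentially the proof of the result being invoked: note that the paper itself gives no argument for this lemma, it simply cites \cite{key-5}, Lemma $4.3$, and the proof there proceeds exactly along your lines --- take $q$ larger than every coordinate coefficient, decompose each monomial evaluation $m(\alpha)$ into the pieces indexed by the value set $\gamma$ of the $X$-slots, use the distinct monomials as the auxiliary systems $\langle u^{(i)}\rangle$ with $E_{i}$ the set of monomials occurring in $P_{i}$, and verify conditions (1)--(3) of Definition \ref{Def4.1} via the disjointness observations you record. The one point you flag is also the right one to flag: read literally over all of $\mathcal{F}_{d}$, condition (2) (and even the definedness of $m^{(i)}_{\gamma}=\sum_{s\in E_{i}}n^{(s)}_{\gamma}$) can fail when $\gamma$ contains elements $\leq q$ coinciding with coordinate coefficients, as your overlap example shows. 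This is resolved exactly as you propose and as the conclusion of the lemma already presupposes: the systems are indexed by the IP ring $\mathcal{F}^{(1)}=\{\alpha:\min\alpha>q\}$, and adequacy is taken relative to that ring, i.e.\ after the canonical re-indexing $\beta\mapsto\{q+n:n\in\beta\}$ identifying $\mathcal{P}_{f}(\mathbb{N})$ with $\mathcal{F}^{(1)}$, so that every index set $\gamma$ that actually occurs satisfies $\min\gamma>q$; with that convention your disjointness arguments give conditions (1)--(3) in full, and your choice of $m$ (exceeding $q$ and all coordinates of $\bigcup_{A\in H}A$) settles condition (2). Apart from the trivial degenerate case where no coordinate coefficients occur (take $q$ arbitrary), I see no gap.
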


\begin{proof}
\cite{key-5}, Lemma $4.3$.
\end{proof}
\begin{thm}
Let $l\in\mathbb{N}$ and let $\mathcal{P}$ be a finite family of
set polynomial over $\left(\mathcal{P}_{f}\left(\mathbb{N}^{l}\right),+\right)$
whose constant terms are empty, and let $p$ be a minimal idempotent
in $\delta\left(\mathcal{P}_{f}\left(\mathbb{N}^{l}\right)\right)$,
and if $R:\mathcal{P}_{f}(\mathbb{N})\to p$ is a function. Then,
there exists sequences $\langle A_{n}\rangle_{n=1}^{\infty}$ in $\mathcal{P}_{f}\left(\mathbb{N}^{l}\right)$
and $\langle\alpha_{n}\rangle_{n=1}^{\infty}$ in $\mathcal{P}_{f}\left(\mathbb{N}\right)$
with $\alpha_{n}<\alpha_{n+1}$ for each $n$ and for every $F\in\mathcal{P}_{f}\left(\mathbb{N}\right)$,
we have $\left\{ A_{\gamma}\right\} \cup\left\{ A_{\gamma}+P\left(\gamma\right):P\in\mathcal{P}\right\} \subseteq R\left(F\right)$,
$\gamma=\cup_{t\in F}\alpha_{t}$ and $A_{\gamma}=\sum_{t\in F}A_{t}$.
\end{thm}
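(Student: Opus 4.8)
The plan is to deduce the statement from Theorem \ref{Mms} by means of Lemma \ref{Lem 5.3}, once the index sets have been lined up. First apply Lemma \ref{Lem 5.3} to the finite family $\mathcal{P}$: since every member of $\mathcal{P}$ has empty constant term, there are $q\in\mathbb{N}$ and an IP ring $\mathcal{F}^{(1)}=\{\alpha\in\mathcal{P}_f(\mathbb{N}):\min\alpha>q\}$ such that $\{\langle P(\alpha)\rangle_{\alpha\in\mathcal{F}^{(1)}}:P\in\mathcal{P}\}$ is an adequate set of VIP systems in $(\mathcal{P}_f(\mathbb{N}^l),+)$.

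Theorem \ref{Mms} is phrased for VIP systems indexed over all of $\mathcal{P}_f(\mathbb{N})$, so the next step is to transport this family along the canonical order isomorphism $\alpha\mapsto\alpha+q:=\{n+q:n\in\alpha\}$ from $\mathcal{P}_f(\mathbb{N})$ onto $\mathcal{F}^{(1)}$. Enumerating $\mathcal{P}=\{P_1,\ldots,P_k\}$, define for $\alpha\in\mathcal{P}_f(\mathbb{N})$ and $1\le i\le k$ the system $v_\alpha^{(i)}:=P_i(\alpha+q)$. Because the shift $\alpha\mapsto\alpha+q$ is a bijection preserving $<$, $\subseteq$, finite unions, and each collection $\mathcal{F}_d$, the generating data witnessing adequacy of $\{\langle P(\alpha)\rangle_{\alpha\in\mathcal{F}^{(1)}}\}$ (the functions $\gamma\mapsto m_\gamma^{(i)}$, the auxiliary VIP systems, and the index sets $E_i$ of Definition \ref{Def4.1}) pulls back along the shift to show that $\{\langle v_\alpha^{(i)}\rangle_{\alpha\in\mathcal{P}_f(\mathbb{N})}:1\le i\le k\}$ is an adequate set of VIP systems in $S=\mathcal{P}_f(\mathbb{N}^l)$.

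Now invoke Theorem \ref{Mms} with this $S$, the minimal idempotent $p\in K(\delta S)$, the given function $R:\mathcal{P}_f(\mathbb{N})\to p$, and the adequate set of VIP systems just built. It yields sequences $\langle a_n\rangle_{n=1}^{\infty}$ in $\mathcal{P}_f(\mathbb{N}^l)$ and $\langle\tilde{\alpha}_n\rangle_{n=1}^{\infty}$ in $\mathcal{P}_f(\mathbb{N})$ with $\tilde{\alpha}_n<\tilde{\alpha}_{n+1}$ such that, for every $F\in\mathcal{P}_f(\mathbb{N})$ with $\tilde{\gamma}=\cup_{t\in F}\tilde{\alpha}_t$, one has $\{\sum_{t\in F}a_t\}\cup\{\sum_{t\in F}a_t+v_{\tilde{\gamma}}^{(i)}:1\le i\le k\}\subseteq R(F)$. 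To conclude, put $A_n:=a_n$ and $\alpha_n:=\tilde{\alpha}_n+q$. Then $\langle\alpha_n\rangle$ lies in $\mathcal{F}^{(1)}$, still satisfies $\alpha_n<\alpha_{n+1}$, and for $F\in\mathcal{P}_f(\mathbb{N})$ the set $\gamma:=\cup_{t\in F}\alpha_t$ equals $\tilde{\gamma}+q$, so $v_{\tilde{\gamma}}^{(i)}=P_i(\tilde{\gamma}+q)=P_i(\gamma)$. With $A_\gamma:=\sum_{t\in F}A_t=\sum_{t\in F}a_t$ the displayed inclusion becomes exactly $\{A_\gamma\}\cup\{A_\gamma+P(\gamma):P\in\mathcal{P}\}\subseteq R(F)$, which is the assertion.

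The only non-bookkeeping point is the claim that transporting the VIP data along $\alpha\mapsto\alpha+q$ really produces an \emph{adequate} set of VIP systems over all of $\mathcal{P}_f(\mathbb{N})$ — in particular that condition (2) of Definition \ref{Def4.1}, which quantifies over $H\in\mathcal{P}_f(S)$ and over tuples $\gamma_i\nsubseteq\{1,\ldots,m\}$, survives the reindexing; here one uses that $\mathcal{F}^{(1)}$ has the special form $\{\alpha:\min\alpha>q\}$, so that the shift is cofinal and matches the tail conditions on both sides. I expect no obstacle beyond this. Alternatively, one may bypass the reindexing by noting that the proof of Theorem \ref{Mms} works verbatim when the sequence $\langle\alpha_n\rangle$ is required to be drawn from a fixed IP ring $\mathcal{F}^{(1)}$ of the above form: the families $\mathcal{A}$ and $\mathcal{B}$ appearing there remain adequately partition regular when the parameter $\alpha$ is restricted to $\mathcal{F}^{(1)}$ by Theorem \ref{Theo4.3} (in the version with the extra constraint $\alpha>\beta$, taking $\beta=\{q\}$), but the reindexing route keeps this section self-contained.
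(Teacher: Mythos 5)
Your proposal is correct and takes essentially the same route as the paper: apply Lemma \ref{Lem 5.3} to get an adequate set of VIP systems from the set polynomials, then invoke Theorem \ref{Mms} with the given minimal idempotent $p$ and the function $R$. The paper's proof consists of exactly these two steps stated without detail; your additional bookkeeping --- transporting the systems along the shift $\alpha\mapsto\alpha+q$ onto the IP ring $\mathcal{F}^{\left(1\right)}$ (or, alternatively, running the induction of Theorem \ref{Mms} inside $\mathcal{F}^{\left(1\right)}$) --- merely fills in the indexing detail that the paper leaves implicit.
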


\begin{proof}
By Lemma \ref{Lem 5.3}, there is an \textit{IP} ring $\mathcal{F}^{\left(1\right)}$
such that $\left\{ \langle P\left(\alpha\right)\rangle_{\alpha\in\mathcal{F}^{\left(1\right)}}:P\left(X\right)\in\mathcal{P}\right\} $
is an adequate set of VIP systems. Thus Theorem \ref{Mms} applies.
\end{proof}
$\vspace{0.3in}$

\textbf{Acknowledgment:} The first author gratefully acknowledges
the support of the Grant CSIR-UGC NET Fellowship, File No. 09/106(0202)/2020-EMR-I,
and the second author gratefully acknowledges the support received
through the University Research Scholarship of the University of Kalyani
(ID: 1F7/URS/Mathematics/2023/S-502).

$\vspace{0.3in}$


\begin{thebibliography}{DHS}
\bibitem[BH]{key-1} V. Bergelson, N. Hindman, Ramsey theory in noncommutative
semigroups, Transl. Am. Math. Soc. 330 (1992) 433--446. 

\bibitem[DHS]{key-2} D. De, N. Hindman, D. Strauss, A new and stronger
central set sets theorem, Fundam. Math. $199\left(2008\right)155-175$.

\bibitem[F]{key-3} H. Furstenberg: Recurrence in Ergodic Theory and
Combinatorial Number Theory, Princeton University Press, Princeton,
N.J., 1981.

\bibitem[G]{key-4} A. Ghosh, A generalised central sets theorem in
partial semigroups, Semigroup Forum 100 (1), 169-179.

\bibitem[HMS]{key-4-1} N. Hindman, A. Maleki, and D. Strauss, Central
sets and their combinatorial characterization, J. Comb. Theory (Series
A) 74 (1996), 188--208.

\bibitem[HM]{key-5} N. Hindman, R. McCutcheon, VIP systems in Partial
Semigroups, Discrete Math. $240\left(2001\right),45-70.$

\bibitem[HP]{key-6} N. Hindman, K. Pleasant, Central sets theorem
for arbitrary adequate partial semigroups, Topology Proceedings $58\left(2021\right),183-206.$

\bibitem[HS]{key-7} N. Hindman,D. Strauss, Algebra in the Stone-\v{C}ech
Compactification: Theory and Applications, 2nd edition, de Gruyter
Berlin, 2012.

\bibitem[M]{key-8} J. Mcleod, Central sets in commutative adequate
partial semigroups,Topology Proceedings $29\left(2005\right),567-576.$

\bibitem[P]{key-9} D. Phulara, A generalize central sets theorem
and applications, Topology and its applications $196\left(2015\right)92-105$.

\bibitem[Pl]{key-10} K. Pleasant, When Ramsey Meets Stone-\v{C}ech
: Some New Results in Ramsey Theory. Ph.D. dissertation. Howard University
(Washington, D.C.). 2017.

\bibitem[SY]{key-12} H. Shi and H. Yang, Nonmetrizable topological
dynamic characterization of central sets, Fundamenta Math. 150 (1996),
1-9. 

\bibitem[Z]{key-13} T. Zhang, Several combinatorial results generalized
from one large subset of semigroups to infinitely many, arXiv:2502.06103 

\end{thebibliography}
\end{document}